\newtheorem{theorem}{Theorem}
\newtheorem{lemma}[theorem]{Lemma}
\newtheorem{proposition}[theorem]{Proposition}
\newtheorem{remark}[theorem]{Remark}
\newtheorem{definition}[theorem]{Definition}
\newtheorem{corollary}[theorem]{Corollary}
\newtheorem{question}{Question}
\newcommand{\A}{\mathbf{A}}
\begin{document}

\noindent{\Large 
Transposed Poisson structures}\footnote{
The authors thank Pasha Zusmanovich, Viktor Lopatkin, Zerui Zhang,  Vladimir Dotsenko, Baurzhan Sartayev, Chengming Bai, and the anonymous referee of the last version of the paper for useful discussions and comments.}
\footnote{The first part of this work is supported by 
FCT, projects UIDB/00212/2020 and UIDP/00212/2020.
The second part of this work is supported by the Russian Science Foundation under grant 22-11-00081.  
} 

	\bigskip

\begin{center}
    	
	{\bf
    Patr\'{i}cia Damas Beites\footnote{Departamento de Matemática and Centro de Matemática e Aplicações (CMA-UBI), Universidade da Beira Interior, Covilh\~{a}, Portugal; \ pbeites@ubi.pt},    
Bruno Leonardo Macedo Ferreira\footnote{Federal University of Technology, Guarapuava, Brazil;
\ brunolmfalg@gmail.com} \&
		Ivan Kaygorodov\footnote{CMA-UBI, Universidade da Beira Interior, Covilh\~{a}, Portugal;
   Saint Petersburg  University, Russia;
  \    kaygorodov.ivan@gmail.com}	 }
	\

\end{center}

\medskip 

\noindent {\bf Abstract.}
{\it To present a survey on known results from the theory of transposed Poisson algebras, as well as to establish new results on this subject, are the main aims of the present paper. Furthermore, a list of open questions for future research is given.
}

\ 

\noindent {\bf Keywords}: 
{\it Lie algebra, transposed Poisson algebra, {\rm Hom}-Lie algebra, $\delta$-derivation.}

\noindent {\bf MSC2020}: 17A30, 17B40, 17B61, 17B63.

  \tableofcontents

\section*{Introduction}

Poisson algebras arose from the study of Poisson geometry in the 1970s and have appeared in an extremely wide range of areas in mathematics and physics, such as Poisson manifolds, algebraic geometry, operads, quantization theory, quantum groups, and classical and quantum mechanics. The study of Poisson algebras also led to other algebraic structures, such as 
noncommutative Poisson algebras \cite{xu94}, 
generic Poisson algebras \cite{ksu18,klms14},
Poisson bialgebras \cite{nb13,lbs20},
algebras of Jordan brackets and generalized Poisson algebras \cite{ck07,ck10,kantor92,k17, z21,msz01},
Gerstenhaber algebras \cite{ks96},
$F$-manifold algebras \cite{ls21},
Novikov-Poisson algebras \cite{xu97},
quasi-Poisson algebras \cite{billig},
double Poisson algebras \cite{v08},
Poisson $n$-Lie algebras \cite{ck16}, etc.
The study of all possible Poisson structures with a certain Lie or associative part is an important problem in the theory of Poisson algebras \cite{kk21,said2, YYZ07}.
Recently, a dual notion of the Poisson algebra (transposed Poisson algebra), by exchanging the roles of the two binary operations in the Leibniz rule defining the Poisson algebra, has been introduced in the paper \cite{bai20} of Bai, Bai, Guo, and Wu. 
They have shown that the transposed Poisson algebra defined in this way not only shares common properties with the Poisson algebra, including the closure undertaking tensor products and the Koszul self-duality as an operad but also admits a rich class of identities. More significantly, a transposed Poisson algebra naturally arises from a Novikov-Poisson algebra by taking the commutator Lie algebra of the Novikov algebra. Consequently, the classic construction of a Poisson algebra from a commutative associative algebra with a pair of commuting derivations has a similar construction to a transposed Poisson algebra when there is one derivation. More broadly, the transposed Poisson algebra also captures the algebraic structures when the commutator is taken in pre-Lie Poisson algebras and to other Poisson-type algebras. 
The ${\rm Hom}$- and ${\rm BiHom}$-versions of transposed Poisson algebras are considered in \cite{hom, bihom}.
A bialgebra theory for transposed Poisson algebras is studied in \cite{lb23}.
Some new examples of transposed Poisson algebras are constructed by applying the Kantor product of multiplications on the same vector space \cite{FK21}. 
More recently, in a paper by Ferreira, Kaygorodov and  Lopatkin,
a relation between $\frac{1}{2}$-derivations of Lie algebras and 
transposed Poisson algebras has been established \cite{FKL}.
These ideas were used for describing all transposed Poisson structures 
on the Witt algebra \cite{FKL}, the Virasoro algebra \cite{FKL},
the algebra $\mathcal{W}(a,b)$\cite{FKL}, 
the thin Lie algebra \cite{FKL}, the
twisted Heisenberg-Virasoro algebra \cite{yh21},
the Schrodinger-Virasoro algebra \cite{yh21}, the
extended Schrodinger-Virasoro algebra \cite{yh21}, 
the 3-dimensional Heisenberg Lie algebra \cite{yh21}, 
 Block Lie algebras and superalgebras \cite{kk22}, 
 Witt type algebras \cite{kk23}, 
 generalized Witt Lie algebras \cite{kk24},
 Lie algebra of upper triangular matrices \cite{kk25},
 Lie incidence algebras \cite{kkinc},
 Schrodinger algebra in $(n+1)$-dimensional space-time \cite{zzk}, and so on.
 The complete algebraic and geometric classification of complex $3$-dimensional transposed Poisson algebras is given in \cite{bk22}.

The structure of the present paper is as follows. 
In Section \ref{prem},  we discuss some definitions and previous results, 
give some effective methods for describing transposed Poisson structures on fixed Lie algebras and apply these methods to some special types of Lie algebras. 
In Section \ref{rell},  we establish many relations between transposed Poisson algebras and other well-known structures,
such as generalized Poisson algebras,  Gelfand-Dorfman algebras,  $F$-manifold algebras, algebras of Jordan brackets, fields of fractions,  Poisson $n$-Lie algebras, etc.  In Section \ref{open},  we give a list of  open questions for future research.

\section{Transposed Poisson algebras and structures}\label{prem} Although all algebras and vector spaces are considered over the complex field, many results can be similarly proven for other fields. The definition of transposed Poisson algebra was given in a paper by Bai, Bai, Guo, and Wu \cite{bai20}. The concept of $\frac{1}{2}$-derivation as a particular case of the notion of $\delta$-derivation was presented in a paper due to Filippov \cite{fil1} (see also \cite{z10,k12} and references therein).

\begin{definition}[see \cite{bai20}]\label{tpa}
Let ${\mathfrak L}$ be a vector space equipped with two nonzero bilinear operations $\cdot$ and $[\cdot,\cdot].$
The triple $({\mathfrak L},\cdot,[\cdot,\cdot])$ is called a transposed Poisson algebra if $({\mathfrak L},\cdot)$ is a commutative associative algebra and
$({\mathfrak L},[\cdot,\cdot])$ is a Lie algebra that satisfies the following compatibility condition
\begin{center}
$2z\cdot [x,y]=[z\cdot x,y]+[x,z\cdot y].$\end{center}
\end{definition}

	\begin{definition} 
		Let $({\mathfrak L},[\cdot,\cdot])$ be a Lie algebra. A  transposed Poisson (algebra) structure  on $({\mathfrak L},[\cdot,\cdot])$ is a commutative associative operation $\cdot$ on $\mathfrak L$ which makes $({\mathfrak L},\cdot,[\cdot,\cdot])$ a transposed Poisson algebra.
  	A transposed Poisson structure $\cdot$ on ${\mathfrak L}$ is called  trivial, if $x\cdot y=0$ for all $x,y\in{\mathfrak L}$.
	\end{definition}

\begin{definition}\label{12der}
Let $({\mathfrak L}, [\cdot,\cdot])$ be an algebra with a multiplication $[\cdot,\cdot]$. Let $\varphi$ be a linear map, and let
$\phi$ be a bilinear map.
Then $\varphi$ is a $\frac{1}{2}$-derivation if it satisfies
\begin{center}
$\varphi[x,y]= \frac{1}{2} \big([\varphi(x),y]+ [x, \varphi(y)] \big);$
\end{center}
 $\phi$ is a $\frac{1}{2}$-biderivation if it satisfies
\begin{longtable}{crl}
$\phi([x,y],z)$&$=$&$ \frac{1}{2} \big( 
 [\phi(x,z),y] +[x,\phi(y,z)]\big),$\\ 
$\phi(x,[y,z])$&$=$&$ \frac{1}{2} \big([\phi(x,y),z]+ 
[y,\phi(x,z)] \big).$
\end{longtable}
\end{definition}

Summarizing Definitions \ref{tpa} and \ref{12der}, we have the following key Lemma.
\begin{lemma}
Let $({\mathfrak L},\cdot,[\cdot,\cdot])$ be a transposed Poisson algebra, 
and let $z$ be an arbitrary element from ${\mathfrak L}.$
Then the right multiplication $R_z$ in the commutative associative algebra $({\mathfrak L},\cdot)$ gives a $\frac{1}{2}$-derivation of the Lie algebra $({\mathfrak L}, [\cdot,\cdot])$, and
the bilinear map  $\cdot$  gives a
$\frac{1}{2}$-biderivation of $({\mathfrak L},[\cdot,\cdot]).$
\end{lemma}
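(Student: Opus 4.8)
The plan is to verify directly that the right multiplication $R_z$, defined by $R_z(x) = z \cdot x$, satisfies the $\frac{1}{2}$-derivation identity from Definition \ref{12der} with respect to the Lie bracket. Concretely, I need to check that
\[
R_z[x,y] = \tfrac{1}{2}\big([R_z(x),y] + [x,R_z(y)]\big)
\]
for all $x,y \in {\mathfrak L}$. Unwinding the definition of $R_z$, the left-hand side is $z \cdot [x,y]$ and the right-hand side is $\tfrac{1}{2}\big([z \cdot x, y] + [x, z \cdot y]\big)$.

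The key observation is that this is precisely the compatibility condition in Definition \ref{tpa}, merely rearranged: that condition reads $2 z \cdot [x,y] = [z \cdot x, y] + [x, z \cdot y]$, and dividing both sides by $2$ (which is legitimate since we work over $\mathbb{C}$, or any field of characteristic different from $2$) yields exactly the required identity. So the proof is essentially a one-line translation between the two pieces of notation.

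The only small things to remark on are: first, that $R_z$ is indeed a linear map, which is immediate from the bilinearity of the commutative associative product $\cdot$; and second, that the bracket $[\cdot,\cdot]$ on ${\mathfrak L}$ is a Lie bracket, which is part of the hypothesis that $({\mathfrak L},\cdot,[\cdot,\cdot])$ is a transposed Poisson algebra, so the target structure $({\mathfrak L},[\cdot,\cdot])$ in Definition \ref{12der} is legitimately a Lie algebra. There is no real obstacle here — the statement is a bookkeeping lemma whose entire content is that the transposed Leibniz rule, read "the other way," says exactly that multiplication operators are $\frac{1}{2}$-derivations. I would present it in two or three lines and move on, since its value lies in what it enables later (connecting transposed Poisson structures to $\frac{1}{2}$-derivations and Hom-Lie structures) rather than in any difficulty of its own.
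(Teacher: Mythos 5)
Your proposal is correct and is exactly the argument the paper intends: the lemma is stated there as an immediate consequence of Definitions \ref{tpa} and \ref{12der}, namely that the compatibility condition $2z\cdot[x,y]=[z\cdot x,y]+[x,z\cdot y]$, divided by $2$, is literally the $\frac{1}{2}$-derivation identity for $R_z$. Nothing is missing.
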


The main example of $\frac{1}{2}$-derivations is the multiplication by an element from the ground field.
Let us call trivial
$\frac{1}{2}$-derivations to such $\frac{1}{2}$-derivations.
As a consequence of the following theorem, we are not interested in trivial $\frac{1}{2}$-derivations.

\begin{theorem}[see \cite{FKL}]\label{princth}
Let ${\mathfrak L}$ be a Lie algebra without non-trivial $\frac{1}{2}$-derivations.
Then every transposed Poisson structure defined on ${\mathfrak L}$ is trivial.
\end{theorem}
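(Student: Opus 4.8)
The plan is to show that if $({\mathfrak L},\cdot,[\cdot,\cdot])$ is any transposed Poisson structure on a Lie algebra ${\mathfrak L}$ admitting only trivial $\tfrac12$-derivations, then the product $\cdot$ forces the bracket to vanish, contradicting the nondegeneracy clause in Definition \ref{tpa} (both operations are required to be nonzero), unless we interpret ``trivial transposed Poisson structure'' as exactly the degenerate case. First I would invoke Lemma \ref{glavlem}: for each $z\in{\mathfrak L}$, the right multiplication operator $R_z\colon x\mapsto z\cdot x$ is a $\tfrac12$-derivation of $({\mathfrak L},[\cdot,\cdot])$. By hypothesis every $\tfrac12$-derivation is trivial, so for each $z$ there is a scalar $\lambda(z)\in\Co$ with $z\cdot x=\lambda(z)\,x$ for all $x\in{\mathfrak L}$.

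Next I would analyze the map $z\mapsto\lambda(z)$. Bilinearity of $\cdot$ makes $\lambda\colon{\mathfrak L}\to\Co$ linear. Commutativity of $\cdot$ gives $\lambda(z)x=z\cdot x=x\cdot z=\lambda(x)z$ for all $x,z$; choosing $x,z$ linearly independent (possible as soon as $\dim{\mathfrak L}\ge 2$) forces $\lambda(z)=\lambda(x)=0$, hence $\cdot\equiv 0$. In the degenerate one-dimensional case the bracket is automatically zero and the associative product is a scalar multiple of the identity action, which is again the trivial situation. Either way, the operation $\cdot$ cannot be a genuine nonzero multiplication coexisting with a nonzero bracket, so any transposed Poisson structure on ${\mathfrak L}$ is trivial in the sense intended (i.e. degenerate/the zero structure). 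I would also remark that the associativity condition $(x\cdot y)\cdot w=x\cdot(y\cdot w)$ and the compatibility identity $2z\cdot[x,y]=[z\cdot x,y]+[x,z\cdot y]$ are then satisfied vacuously, so nothing further needs checking.

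The only genuinely delicate point is bookkeeping about what ``trivial'' means on each side: a trivial $\tfrac12$-derivation is scalar multiplication by a field element, whereas a trivial transposed Poisson structure should be declared to be one with zero (or scalar) associative product — so the main obstacle is less a computational hurdle than making sure the statement is phrased so that the degenerate low-dimensional cases and the ``nonzero operations'' requirement of Definition \ref{tpa} are reconciled. Assuming the paper's convention that a transposed Poisson structure is \emph{non-trivial} precisely when $\cdot$ is not identically the zero product, the argument above is complete: $R_z$ trivial for all $z$ $\Rightarrow$ $\cdot=0$ $\Rightarrow$ the structure is trivial. I would present this as a short three-line proof: cite Lemma \ref{glavlem}, deduce $z\cdot x=\lambda(z)x$, use commutativity to get $\lambda\equiv 0$, conclude $\cdot=0$.
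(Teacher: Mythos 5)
Your argument is correct and is exactly the standard proof of this statement (the paper states it without proof, recalling it from \cite{FKL}, but the intended argument is precisely yours): by Lemma \ref{glavlem} each $R_z$ is a $\frac{1}{2}$-derivation, hence $z\cdot x=\lambda(z)x$ by the triviality hypothesis, and commutativity applied to linearly independent $x,z$ forces $\lambda\equiv 0$, i.e.\ $\cdot=0$. Your side remarks on the one-dimensional case and on the ``nonzero operations'' clause of Definition \ref{tpa} are the right way to reconcile the conventions, and nothing further is needed.
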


\subsection{Unital transposed Poisson algebras}\label{trgenpoi}
The present subsection gives a way to relate unital transposed Poisson algebras to generalized Poisson algebras, algebras of Jordan brackets, and quasi-Poisson algebras (see section \ref{rell}). 
Some theorems known for these types of algebras can afford interesting results and conjectures for the theory of transposed Poisson algebras.

\begin{theorem}[see, \cite{bai20}]\label{nonass}
Let $(\A, \cdot, [ \cdot ,\cdot  ] )$ be a complex transposed Poisson algebra. Let $(\A, \cdot)$ be a unital associative (not necessarily commutative) algebra. 
Then there is a derivation $\mathfrak{D}$ on $(\A, \cdot)$ such that
\begin{equation}\label{cbracket}
    [ x , y  ]= \mathfrak{D}(x)\cdot y-x\cdot \mathfrak{D}(y).
    \end{equation}
\end{theorem}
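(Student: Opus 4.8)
The plan is to exploit Lemma~\ref{glavlem}: for every $z \in \A$, the right multiplication $R_z$ is a $\frac12$-derivation of the Lie algebra $(\A,[\cdot,\cdot])$. Applying this with $z=1$ (the unit), I get that $R_1 = \mathrm{id}$ is a $\frac12$-derivation, which is automatic and says nothing new; the substance comes from reading the compatibility condition directly with one slot equal to $1$. First I would set $z = x$ and $y = 1$ in $2z\cdot[x,y] = [z\cdot x, y] + [x, z\cdot y]$, or more efficiently plug $z = 1$ into various specializations, to pin down the bracket in terms of a single linear map. Concretely, define $\mathfrak{D}(x) := [x,1]$... wait, that is zero-adjacent; instead the natural candidate is to use the compatibility condition with the roles arranged so that $[x,y]$ gets expressed through brackets with $1$. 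Setting $z = y$, $x = 1$ gives $2y\cdot[1,y] = [y,y] + [1, y\cdot y] = [1,y^2]$, and setting $z=x$, $y=1$ gives $2x\cdot[x,1] = [x^2,1] + [x,x] = [x^2,1]$. The workable choice is to define $\mathfrak{D}(x) = \tfrac12[?]$ — I expect the correct definition to emerge from putting $z=x$ and keeping $y$ general, then symmetrizing.

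More carefully: in the compatibility identity put $y = 1$, giving $2z\cdot[x,1] = [z\cdot x, 1] + [x, z]$, i.e. $[x,z] = 2z\cdot[x,1] - [z\cdot x,1]$. So if I set $\mathfrak{D}(x) := [x,1]$, then $[x,z] = 2z\cdot \mathfrak{D}(x) - \mathfrak{D}(z\cdot x)$. By antisymmetry of the bracket and commutativity of $\cdot$ this must also equal $-\big(2x\cdot\mathfrak{D}(z) - \mathfrak{D}(x\cdot z)\big)$; adding the two expressions forces $2\mathfrak{D}(x\cdot z) = 2z\cdot\mathfrak{D}(x) + 2x\cdot\mathfrak{D}(z)$, i.e. $\mathfrak{D}$ is a derivation of $(\A,\cdot)$, and then each expression collapses to $[x,z] = \mathfrak{D}(x)\cdot z - x\cdot\mathfrak{D}(z)$, which is exactly \eqref{cbracket}. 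I would also check that $[\cdot,\cdot]$ defined by such a $\mathfrak{D}$ always satisfies the transposed Poisson compatibility and the Jacobi identity — the compatibility is a short computation using the derivation property, and Jacobi follows because $\mathfrak{D}(x)\cdot y - x\cdot\mathfrak{D}(y)$ is the standard bracket built from one derivation on a commutative associative algebra.

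For the "in particular" claim: a finite-dimensional semisimple commutative associative $\Co$-algebra is a finite product of copies of $\Co$, hence has no nonzero derivations (a derivation annihilates every idempotent, and the algebra is spanned by orthogonal idempotents, or equivalently every element is algebraic/semisimple so $\mathfrak{D}(e^n) = n e^{n-1}\mathfrak{D}(e)$ forces $\mathfrak{D}=0$ on idempotents). Thus $\mathfrak{D} = 0$, so $[\cdot,\cdot] \equiv 0$, and the only transposed Poisson structure is the (degenerate/trivial) one — consistent with Theorem~\ref{princth}. The main obstacle I anticipate is purely bookkeeping: making sure the antisymmetrization step genuinely yields that $\mathfrak{D}$ is a derivation rather than merely something weaker, and handling the non-commutative case in the hypothesis (the statement allows $(\A,\cdot)$ associative but not necessarily commutative, yet the transposed Poisson axioms were stated for commutative $\cdot$ — so part of the work is arguing that the compatibility condition together with unitality forces the relevant identities to go through, or that one only needs associativity and a unit for the displayed manipulations). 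I would treat that by noting the specializations $y=1$ used above need only a two-sided unit and associativity, and that the derivation identity obtained is the Leibniz rule for the possibly-noncommutative product.
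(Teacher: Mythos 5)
Your argument is correct and is essentially the paper's own proof: you specialize the compatibility condition with one argument equal to $1$, set $\mathfrak{D}(x)=[x,1]$, and use antisymmetry of the bracket to extract both the Leibniz rule for $\mathfrak{D}$ and the formula $[x,y]=\mathfrak{D}(x)\cdot y-x\cdot\mathfrak{D}(y)$, then conclude the semisimple case from the absence of nonzero derivations on a product of copies of $\Co$. The only stylistic difference is the false starts at the beginning; the final computation matches the paper's line for line.
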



In fact, each complex finite-dimensional semisimple commutative associative algebra is a direct sum of one-dimensional ideals. On the other hand, it is known that all derivations of this algebra are trivial and it is unital.
 Hence, all transposed Poisson structures on a complex finite-dimensional semisimple commutative associative algebra   $(\A, \cdot)$ are trivial.

\subsection{Transposed Poisson structures on nilpotent $n$-Lie algebras}
Let us recall the definition of transposed Poisson structures on $n$-Lie algebras.

\begin{definition}
Let ${\mathfrak L}$ be a vector space equipped with an anticommutative $n$-linear operation $[\cdot ,\ldots, \cdot].$
Then $({\mathfrak L},[\cdot ,\ldots, \cdot])$ is called  an $n$-Lie algebra if
\begin{equation*}
 [[x_1 ,\ldots, x_n], y_2, \ldots, y_n] = \sum\limits_{i=1}^n
 [x_1,\ldots, [x_i, y_2,\ldots, y_n], \ldots, x_n]. 
\end{equation*}

\end{definition}

\begin{definition}[see, \cite{bai20}]
Let ${\mathfrak L}$ be a vector space equipped with two nonzero operations -- a bilinear operation $\cdot$ and an $n$-linear operation $[\cdot ,\ldots, \cdot].$
The triple $({\mathfrak L},\cdot,[\cdot ,\ldots, \cdot])$ is called a transposed Poisson $n$-Lie algebra if $({\mathfrak L},\cdot)$ is a commutative associative algebra and
$({\mathfrak L},[\cdot ,\ldots, \cdot])$ is an $n$-Lie algebra that satisfies the following compatibility condition
\begin{equation}
 z\cdot [x_1 ,\ldots, x_n]= \frac{1}{n} \sum\limits_{i=1}^n [x_1,\ldots, z\cdot x_i, \ldots, x_n]. 
 \label{tnp} 
\end{equation}

\end{definition}

\begin{theorem}\label{nilpotent}
Let $\mathfrak{L}$ be a nilpotent $k$-dimensional $n$-Lie algebra ($n<k$).
Then $\mathfrak{L}$ admits a non-trivial transposed Poisson $n$-Lie structure.
\end{theorem}

\begin{proof}
In fact, 
choose the basis $\{e_1,e_2,\ldots,e_k\}$ of $\mathfrak{L}$ in the following way:
$e_1,e_2, \ldots, e_n \in \mathfrak{L} \setminus \mathfrak{L}^2$ and $e_k \in {\rm Ann}(\mathfrak{L}).$
It is easy to see that the linear mapping $\varphi$ given by
    $\varphi(e_1)=\varphi(e_2)=\ldots=\varphi(e_n)=e_k$ 
is a $\frac{1}{n}$-derivation of $\mathfrak{L}.$
Then the commutative associative multiplication $\cdot$ given by 
    $e_i \cdot e_j =e_k$    $(1\leq i,j\leq n)$
gives a non-trivial structure of transposed Poisson $n$-Lie algebra.
\end{proof}

\subsection{Transposed Poisson structures and quasi-automorphisms}\label{quasi}
Let us recall the definition of quasi-automorphisms on Lie algebras.
\begin{definition}
Let $({\mathfrak L}, [\cdot,\cdot])$ be a Lie algebra, and let $\varphi$ be a linear map.
Then $\varphi$ is a quasi-automorphism on $({\mathfrak L}, [\cdot,\cdot])$ if there exist another linear map ${\phi},$
such that 
\[
[\varphi(x),\varphi(y)]=\phi[x,y].
\]
\end{definition}
Thanks to \cite[Proposition 2.9]{bai20}, 
each transposed Poisson structure $({\mathfrak L}, \cdot, [\cdot,\cdot])$ introduces a quasi-automorphism $\varphi_h$
on $({\mathfrak L}, [\cdot,\cdot]),$ such that 
\begin{center}
$\varphi_h^2[x,y]=[\varphi_h(x),\varphi_h(y)],$ where 
    $\varphi_h(x)= h \cdot x$ for a fixed element $h.$    
\end{center}

Automorphisms and scalar multiplication maps are trivial quasi-automorphisms. Hence, we have the following statement.

\begin{proposition}\label{quasitr}
If a Lie algebra  $({\mathfrak L}, [\cdot,\cdot])$ does not admit non-trivial quasi-automorphisms, then  $({\mathfrak L}, [\cdot,\cdot])$ does not admit non-trivial transposed Poisson structures.
\end{proposition}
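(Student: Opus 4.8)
The plan is to contrapose and show that a non-trivial transposed Poisson structure on $({\mathfrak L}, [\cdot,\cdot])$ forces a non-trivial quasi-automorphism. So suppose $({\mathfrak L}, \cdot, [\cdot,\cdot])$ is a transposed Poisson algebra in which the commutative associative multiplication $\cdot$ is non-trivial, i.e. $x\cdot y \neq 0$ for some $x,y$. First I would invoke the fact recalled just above the statement (from \cite[Proposition 2.9]{bai20}): for any fixed $h \in {\mathfrak L}$, the map $\varphi_h(x) = h\cdot x$ satisfies $\varphi_h^2[x,y] = [\varphi_h(x),\varphi_h(y)]$, so $\varphi_h$ is a quasi-automorphism with associated map $\phi = \varphi_h^2$. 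Thus every transposed Poisson structure already hands us a supply of quasi-automorphisms, parametrized by $h$; the whole content of the proof is to choose $h$ so that $\varphi_h$ is \emph{not} one of the trivial ones (a scalar multiple of the identity, or an automorphism).

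The key step is therefore the following dichotomy. If for \emph{every} $h \in {\mathfrak L}$ the multiplication operator $R_h = \varphi_h$ is a scalar multiple of the identity, then $h\cdot x = \lambda(h)\, x$ for all $x$, with $\lambda \colon {\mathfrak L}\to \Co$ linear; feeding this into commutativity and associativity of $\cdot$ (and into the compatibility identity $2z\cdot[x,y] = [z\cdot x,y] + [x,z\cdot y]$) one checks that $\lambda$ must vanish identically unless ${\mathfrak L}$ is one-dimensional — in particular, on any Lie algebra of dimension $>1$ this case degenerates to $x\cdot y = 0$, contradicting non-triviality. (If one wants to cover the one-dimensional case too, note that a one-dimensional Lie algebra is abelian and the statement is vacuous or trivially checked directly.) So under the non-triviality hypothesis there is some $h$ with $\varphi_h$ not scalar. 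It remains to rule out the other kind of trivial quasi-automorphism: that $\varphi_h$ happens to be a Lie-algebra automorphism. Here I would argue that even if some particular $\varphi_h$ were an automorphism, we may pass to $\varphi_{h} + \mu\,\mathrm{id} = \varphi_{h + \mu\cdot(\text{unit-like adjustment})}$ — more carefully, the set of $h$ for which $\varphi_h$ is scalar is a linear subspace $V \subsetneq {\mathfrak L}$, and the set for which $\varphi_h$ is an automorphism is not a linear subspace (it is not closed under addition, since a sum of automorphisms is generally not an automorphism and in particular is not invertible when the two differ), so a generic $h$ outside $V$ gives a $\varphi_h$ that is neither scalar nor an automorphism, hence a non-trivial quasi-automorphism.

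The main obstacle I anticipate is precisely making the last point rigorous: showing that one cannot be unlucky with \emph{every} choice of $h$, i.e. that it is impossible for each $\varphi_h$ to be either scalar or an automorphism while some $\varphi_h$ is genuinely non-scalar. The clean way around this is dimension/invertibility: an automorphism of ${\mathfrak L}$ is in particular an invertible linear map, whereas the multiplication operators $\varphi_h = R_h$ typically have nontrivial kernel or image — indeed in a non-unital situation (and the interesting Lie algebras here, e.g. the simple graded ones, carry no unit for $\cdot$) one has $\varphi_h(h') \cdot h'' = h\cdot h' \cdot h''$, and iterating shows the image of $\varphi_h$ lies in ${\mathfrak L}\cdot{\mathfrak L}$, which is a proper subspace; hence $\varphi_h$ is never surjective and so never an automorphism. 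This reduces everything to the first dichotomy above. I would therefore structure the proof as: (i) recall $\varphi_h$ is always a quasi-automorphism; (ii) assume $\cdot \neq 0$; (iii) show $\varphi_h$ cannot be an automorphism for any $h$ because it fails to be surjective (using that ${\mathfrak L}\cdot{\mathfrak L}$, or more precisely the image of $\varphi_h$, is a proper subspace — here one may need the mild observation that for a Lie algebra, ${\mathfrak L} = {\mathfrak L}\cdot{\mathfrak L}$ together with the compatibility condition already forces strong constraints, and in the genuinely non-unital case surjectivity simply fails); (iv) show that if $\varphi_h$ were scalar for every $h$ then $\cdot$ would be trivial on a Lie algebra of dimension $>1$; (v) conclude that some $\varphi_h$ is a non-trivial quasi-automorphism, which is the contrapositive of the claim.
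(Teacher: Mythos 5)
Your overall route is the same as the paper's: the paper derives this proposition in one line from the observation (via \cite[Proposition 2.9]{bai20}) that every transposed Poisson structure produces the quasi-automorphisms $\varphi_h(x)=h\cdot x$ with $\varphi_h^2[x,y]=[\varphi_h(x),\varphi_h(y)]$, together with the remark that automorphisms and scalar maps are the trivial quasi-automorphisms; you are attempting to make that one-line deduction rigorous by contraposition. Your step (iv) is correct and is exactly the standard argument behind Theorem \ref{princth}: if every $\varphi_h$ were scalar, say $h\cdot x=\lambda(h)x$, commutativity of $\cdot$ gives $\lambda(h)x=\lambda(x)h$ and hence $\cdot=0$ once $\dim{\mathfrak L}>1$, while $\dim{\mathfrak L}=1$ is excluded because the bracket would vanish, contrary to Definition \ref{tpa}.

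The genuine gap is in step (iii). The claim that $\varphi_h$ can never be an automorphism because its image lies in the proper subspace ${\mathfrak L}\cdot{\mathfrak L}$ is false in general: ${\mathfrak L}\cdot{\mathfrak L}$ need not be proper, and for the unital transposed Poisson algebras studied at length in subsection \ref{trgenpoi} one has $\varphi_1=\mathrm{id}$, which \emph{is} an automorphism. Your fallback ``generic $h$'' argument (the automorphisms among the $\varphi_h$ are not closed under addition, so a generic $h$ avoids them) is not a proof. What actually closes the gap is the identity you already quoted: if $\varphi_h$ is an automorphism, then $\varphi_h^2[x,y]=[\varphi_h(x),\varphi_h(y)]=\varphi_h[x,y]$, and invertibility of $\varphi_h$ forces $\varphi_h(u)=u$ for every $u\in[{\mathfrak L},{\mathfrak L}]$. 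Hence the set of such $h$ is contained in the affine subspace $W=\{h:\ h\cdot u=u\ \mbox{for all}\ u\in[{\mathfrak L},{\mathfrak L}]\}$, which does not contain $0$ (since $[{\mathfrak L},{\mathfrak L}]\neq 0$) and is therefore proper, while the set $V$ of $h$ with $\varphi_h$ scalar is a proper linear subspace by step (iv). A complex vector space is not the union of two proper affine subspaces, so some $h\notin V\cup W$ yields a $\varphi_h$ that is neither scalar nor an automorphism, which is the non-trivial quasi-automorphism you need.
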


Zhou,  Feng, Kong,  Wang, and Chen gave the classification of quasi-automorphisms of 
Borel and parabolic
subalgebras of a complex finite-dimensional simple Lie algebra of rank $l.$ Namely, they proved that in the case $l >1$, each quasi-automorphism is trivial \cite{wang11,zfk}.
Summarizing with Proposition \ref{quasitr}, we have the following statement.

\begin{lemma}
Let ${\mathfrak L}$ be a complex finite-dimensional simple Lie algebra of rank $l>1$, and let $\mathfrak p$ be an arbitrary Borel or parabolic
subalgebra of ${\mathfrak L}$.
Then  $\mathfrak p$ does not admit non-trivial transposed Poisson structures.
\end{lemma}

\subsection{Transposed Poisson structures and ${\rm Hom}$-Lie algebra structures}\label{hom}
Let us recall the definition of ${\rm Hom}$-structures on Lie algebras.
\begin{definition}
Let $({\mathfrak L}, [\cdot,\cdot])$ be a Lie algebra, and let $\varphi$ be a linear map.
Then $({\mathfrak L}, [\cdot,\cdot], \varphi)$ is a ${\rm Hom}$-Lie structure on $({\mathfrak L}, [\cdot,\cdot])$ if 
\[
[\varphi(x),[y,z]]+[\varphi(y),[z,x]]+[\varphi(z),[x,y]]=0.
\]

\end{definition}

\medskip 

Let us give some special cases of ${\rm Hom}$-Lie structures.

\begin{enumerate}
    \item[$\bullet$] 
$({\mathfrak L}, [\cdot,\cdot], \varphi)$ is a trivial ${\rm Hom}$-Lie structure on $({\mathfrak L}, [\cdot,\cdot])$ if $\varphi=\mathbb C \cdot{\rm id}.$

\item[$\bullet$] $({\mathfrak L}, [\cdot,\cdot], \varphi)$ is a central ${\rm Hom}$-Lie structure on $({\mathfrak L}, [\cdot,\cdot])$ if $\varphi({\mathfrak L}) \subseteq Z({\mathfrak L}).$

\item[$\bullet$] $({\mathfrak L}, [\cdot,\cdot], \varphi)$ is a $\mathfrak{c}$-trivial ${\rm Hom}$-Lie structure on $({\mathfrak L}, [\cdot,\cdot])$ if $\varphi$ is a linear combination of trivial and central ${\rm Hom}$-Lie structures.
\end{enumerate}

Thanks to \cite[Proposition 2.9]{bai20}, 
each transposed Poisson structure $({\mathfrak L}, \cdot, [\cdot,\cdot])$ introduces a ${\rm Hom}$-Lie structure $\varphi_h$
on $({\mathfrak L}, [\cdot,\cdot]),$ such that $\varphi_h(x)= h \cdot x$ for a fixed element $h.$ 
Hence, we have the following statement.

\begin{proposition}\label{homtr}
If a Lie algebra  $({\mathfrak L}, [\cdot,\cdot])$ does not admit non-trivial ${\rm Hom}$-Lie structures, then  $({\mathfrak L}, [\cdot,\cdot])$ does not admit non-trivial transposed Poisson structures.
\end{proposition}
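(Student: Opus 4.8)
The plan is to derive this proposition directly from the quasi-automorphism picture that was just set up, since ${\rm Hom}$-Lie structures and quasi-automorphisms are linked through the same construction $\varphi_h(x) = h\cdot x$. Concretely, I would argue the contrapositive: assume $({\mathfrak L},[\cdot,\cdot])$ carries a non-trivial transposed Poisson structure $({\mathfrak L},\cdot,[\cdot,\cdot])$ and produce from it a non-trivial ${\rm Hom}$-Lie structure. By \cite[Proposition 2.9]{bai20}, which is already quoted in subsection \ref{quasi}, for any fixed $h \in {\mathfrak L}$ the map $\varphi_h(x) = h\cdot x$ satisfies $\varphi_h^2[x,y] = [\varphi_h(x),\varphi_h(y)]$, and in particular $\varphi_h$ is a ${\rm Hom}$-Lie structure on $({\mathfrak L},[\cdot,\cdot])$ (expanding the ${\rm Hom}$-Jacobi identity and using the quasi-automorphism identity together with the ordinary Jacobi identity for $[\cdot,\cdot]$). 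So the remaining task is purely to check that for a suitable choice of $h$ the map $\varphi_h$ is not of the trivial form $\mathbb{C}\cdot{\rm id}$.

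The key step, then, is the following dichotomy. If \emph{every} $\varphi_h$ were trivial, i.e. for each $h$ there is a scalar $\lambda(h)$ with $h\cdot x = \lambda(h)\, x$ for all $x$, then the commutative associative product $\cdot$ is essentially scalar multiplication on ${\mathfrak L}$; feeding this back into the transposed Poisson compatibility identity $2z\cdot[x,y] = [z\cdot x,y] + [x,z\cdot y]$ forces $2\lambda(z)[x,y] = 2\lambda(z)[x,y]$, which is vacuous, but feeding it into associativity and bilinearity of $\cdot$ shows $\lambda$ is linear, and then the right multiplications $R_z$ are all scalar $\frac{1}{2}$-derivations — i.e. the transposed Poisson structure is trivial in the sense of Theorem \ref{princth}, contradicting our assumption. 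Hence some $\varphi_h$ is a non-trivial ${\rm Hom}$-Lie structure. This is exactly the parallel of the argument giving Proposition \ref{quasitr}, just with "quasi-automorphism" replaced by "${\rm Hom}$-Lie structure", and I would phrase the proof to run through Theorem \ref{princth} and Lemma \ref{glavlem} so that "non-trivial transposed Poisson structure $\Rightarrow$ non-trivial $\frac{1}{2}$-derivation among the $R_z$ $\Rightarrow$ some $R_z = \varphi_z$ is a non-scalar linear map $\Rightarrow$ non-trivial ${\rm Hom}$-Lie structure".

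The main obstacle is bookkeeping about what "trivial" means on each side: a priori a ${\rm Hom}$-Lie structure on ${\mathfrak L}$ might be trivial ($\varphi \in \mathbb{C}\cdot{\rm id}$) while still coming from a non-trivial transposed Poisson structure, if the offending non-scalar right multiplications $R_z$ happened never to satisfy the ${\rm Hom}$-Jacobi identity as genuine non-trivial examples. So I need to be careful that the \emph{same} element $z$ witnessing non-triviality of the transposed Poisson structure (via $R_z \notin \mathbb{C}\cdot{\rm id}$) also yields, through \cite[Proposition 2.9]{bai20}, a ${\rm Hom}$-Lie structure that is non-trivial as a ${\rm Hom}$-Lie structure. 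Since \cite[Proposition 2.9]{bai20} guarantees $\varphi_z$ is always a ${\rm Hom}$-Lie structure regardless of whether it is scalar, and non-scalarity of $\varphi_z = R_z$ is exactly non-triviality of the ${\rm Hom}$-Lie structure, this goes through; the only thing to state cleanly is that if $({\mathfrak L},\cdot,[\cdot,\cdot])$ is non-trivial then not all $R_z$ are scalar, which is the content of Theorem \ref{princth} read contrapositively. I expect the whole proof to be three or four lines once these references are assembled.

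\begin{proof}
We prove the contrapositive. Suppose $({\mathfrak L}, [\cdot,\cdot])$ admits a non-trivial transposed Poisson structure $({\mathfrak L}, \cdot, [\cdot,\cdot])$. By Theorem \ref{princth}, the Lie algebra ${\mathfrak L}$ then admits a non-trivial $\frac{1}{2}$-derivation; more precisely, by Lemma \ref{glavlem} there is an element $h \in {\mathfrak L}$ for which the right multiplication $R_h$ in $({\mathfrak L},\cdot)$ is a $\frac{1}{2}$-derivation that is not a scalar multiple of the identity (otherwise every $R_z$ would be a trivial $\frac{1}{2}$-derivation, and the structure would be trivial). By \cite[Proposition 2.9]{bai20}, the linear map $\varphi_h(x) = h \cdot x = R_h(x)$ defines a ${\rm Hom}$-Lie structure on $({\mathfrak L}, [\cdot,\cdot])$, since $\varphi_h^2[x,y] = [\varphi_h(x), \varphi_h(y)]$ together with the Jacobi identity for $[\cdot,\cdot]$ implies the ${\rm Hom}$-Jacobi identity. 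As $\varphi_h = R_h \notin \mathbb{C}\cdot{\rm id}$, this ${\rm Hom}$-Lie structure is non-trivial. Hence $({\mathfrak L}, [\cdot,\cdot])$ admits a non-trivial ${\rm Hom}$-Lie structure, which is what we needed to show.
\end{proof}
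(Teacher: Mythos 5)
Your proof is correct and follows essentially the same route as the paper, which derives Proposition \ref{homtr} in one line from the observation (via \cite[Proposition 2.9]{bai20}) that each transposed Poisson structure yields ${\rm Hom}$-Lie structures $\varphi_h(x)=h\cdot x$; your contrapositive formulation merely makes explicit the bookkeeping that non-triviality of the structure forces some $R_h$ to be non-scalar, which is exactly the content of Theorem \ref{princth} that the paper leaves implicit.
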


The subsequent list of algebras gives  algebras without non-trivial ${\rm Hom}$-Lie structures (known until now): 
\medskip 
\begin{enumerate}
\item complex finite-dimensional  simple Lie algebras ({\rm dim} $\neq 3$) and superalgebras \cite{xjl15,jl08,yl15};

\item infinite-dimensional Lie superalgebras of vector fields \cite{ysl14,sl17};

\item infinite-dimensional  Cartan algebras $W(n>1),$ $S(n),$ $H(2r),$ $K(2r + 1)$ \cite{xl17}.

\end{enumerate}
\medskip 

Hence, using Proposition \ref{homtr}, we have the following statement.

\begin{lemma}

Complex finite-dimensional  simple Lie algebras ({\rm dim} $\neq 3$) and superalgebras,
infinite-dimensional Lie superalgebras of vector fields, 
and  infinite-dimensional  Cartan algebras $W(n>1),$ $S(n),$ $H(2r),$ $K(2r + 1)$ do not  admit non-trivial transposed Poisson structures.

\end{lemma}

Let us give one more useful trivial observation.

\begin{proposition}\label{homctr}
If a Lie algebra  $({\mathfrak L}, [\cdot,\cdot])$ with one-dimensional center $Z({\mathfrak L})$ admits only  $\mathfrak c$-trivial ${\rm Hom}$-Lie structures, then  $({\mathfrak L}, [\cdot,\cdot])$ does not admit non-trivial transposed Poisson structures.
\end{proposition}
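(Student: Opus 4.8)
The plan is to combine Proposition~\ref{homtr} with the hypothesis in a direct structural argument. Suppose $({\mathfrak L}, \cdot, [\cdot,\cdot])$ is a transposed Poisson structure on ${\mathfrak L}$; I want to show $\cdot$ is the zero multiplication, i.e. the structure is trivial. By \cite[Proposition 2.9]{bai20} recalled in subsection~\ref{hom}, for each fixed $h \in {\mathfrak L}$ the map $\varphi_h(x) = h \cdot x$ is a ${\rm Hom}$-Lie structure on $({\mathfrak L}, [\cdot,\cdot])$. By hypothesis every ${\rm Hom}$-Lie structure is $\mathfrak c$-trivial, so each $\varphi_h$ decomposes as $\varphi_h = \lambda_h \,{\rm id} + \psi_h$, where $\lambda_h \in \mathbb C$ and $\psi_h({\mathfrak L}) \subseteq Z({\mathfrak L})$. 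The goal is to force $\lambda_h = 0$ and $\psi_h = 0$ for all $h$.

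First I would pin down $\lambda_h$. Since $Z({\mathfrak L})$ is one-dimensional, fix a nonzero $c$ spanning it; then $\psi_h(x) = \mu_h(x)\, c$ for a linear functional $\mu_h$ on ${\mathfrak L}$, so $h \cdot x = \lambda_h x + \mu_h(x) c$ for all $x$. The key is that the left-hand side is symmetric and associative in $h$ and $x$. Symmetry $h \cdot x = x \cdot h$ gives $\lambda_h x + \mu_h(x) c = \lambda_x h + \mu_x(h) c$ for all $h, x$. Choosing $x$ not proportional to $h$ and not lying in any inconvenient subspace, and using that ${\mathfrak L}$ has dimension at least $2$ (otherwise the statement is vacuous, as a $1$-dimensional Lie algebra equals its own center), comparison of the $h$- and $x$-components forces $\lambda_h = 0$ unless $h \in Z({\mathfrak L})$; and for $h \in Z({\mathfrak L})$ one checks $\lambda_h = 0$ as well by evaluating at a suitable $x$. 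Hence $\lambda_h \equiv 0$, i.e. $h \cdot x \in Z({\mathfrak L})$ for all $h, x$, so ${\mathfrak L} \cdot {\mathfrak L} \subseteq Z({\mathfrak L})$.

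Next I would use associativity to kill $\psi_h$ too. From $h \cdot x = \mu_h(x) c$ we get $(h \cdot x) \cdot y = \mu_h(x)\, (c \cdot y) = \mu_h(x)\,\mu_c(y)\, c$, and similarly $h \cdot (x \cdot y) = \mu_x(y)\,\mu_h(c)\, c$. Associativity then reads $\mu_h(x)\,\mu_c(y) = \mu_x(y)\,\mu_h(c)$ for all $h, x, y$. If $\mu_c \neq 0$, pick $y$ with $\mu_c(y) = 1$; then $\mu_h(x) = \mu_x(y)\,\mu_h(c)$, so $\mu_h(x)$ factors as $f(h) g(x)$ with $g(x) = \mu_x(y)$ fixed and $f(h) = \mu_h(c)$; feeding this back, $f(h)g(x)\mu_c(y') = f(x)g(y')f(h)$, forcing $g(x)\mu_c(y') = f(x)g(y')$, whence $g$ is a scalar multiple of $f$ and $h \cdot x = \alpha f(h) f(x)\, c$ for a scalar $\alpha$ and a linear functional $f$; but then $\varphi_c(x) = c \cdot x = \alpha f(c) f(x)\, c$ shows $\varphi_c$ is central with $\mu_c = \alpha f(c) f$, consistent, and the surviving freedom is absorbed by rescaling — one then invokes the compatibility identity $2z\cdot[x,y] = [z\cdot x, y] + [x, z\cdot y]$ of Definition~\ref{tpa}: since $z \cdot x, z\cdot y \in Z({\mathfrak L})$ the right side vanishes, so $z \cdot [x,y] = 0$ for all $z$, and since $[{\mathfrak L},{\mathfrak L}]$ is nonzero (a nonzero anticommutative Lie bracket, as required in Definition~\ref{tpa}) this contradicts $\cdot$ being nonzero unless $[{\mathfrak L},{\mathfrak L}] \subseteq {\rm Ann}(\cdot)$; pushing this through with $\cdot \neq 0$ yields the contradiction. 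If instead $\mu_c = 0$, i.e. $c \cdot {\mathfrak L} = 0$, then $Z({\mathfrak L}) \subseteq {\rm Ann}(\cdot)$ and the same compatibility-identity argument applies directly: ${\mathfrak L}\cdot{\mathfrak L} \subseteq Z({\mathfrak L}) \subseteq {\rm Ann}(\cdot)$ forces $z\cdot[x,y]=0$ for all $z$, hence $[x,y]$ lies in the (two-step) annihilator, and iterating shows $\cdot$ annihilates a set on which the bracket is supported, ultimately contradicting nontriviality of both operations.

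The main obstacle I anticipate is the bookkeeping in the last paragraph: disentangling the bilinear form $h \cdot x = \mu_h(x) c$ using only symmetry and associativity, and then correctly invoking the transposed Poisson compatibility identity to convert ``${\mathfrak L}\cdot{\mathfrak L} \subseteq Z({\mathfrak L})$'' into ``$\cdot = 0$''. The clean way is: once $h \cdot x \in Z({\mathfrak L})$ always, the compatibility condition gives $2 z \cdot [x,y] = [z \cdot x, y] + [x, z\cdot y] = 0$ because $z\cdot x, z\cdot y$ are central, so $Z({\mathfrak L}) \supseteq {\mathfrak L}\cdot{\mathfrak L}$ actually annihilates all brackets; combined with the Lie algebra being perfect-or-not this is enough — but to be safe one should phrase the conclusion as: the ideal ${\mathfrak L}\cdot{\mathfrak L}$ is central and annihilates $[{\mathfrak L},{\mathfrak L}]$, and a short separate lemma (or direct use of the identities in \cite[Theorem 2.5]{bai20}) shows a transposed Poisson algebra whose commutative product maps into an annihilating central subspace must have zero product. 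I would isolate that last implication as the crux and verify it carefully.
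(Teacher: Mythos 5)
The paper states Proposition \ref{homctr} as an observation without proof, so your argument has to stand on its own. Its first half does: since each $\varphi_h=R_h$ is both a ${\rm Hom}$-Lie structure and (by Lemma \ref{glavlem}) a $\frac{1}{2}$-derivation, the decomposition $\varphi_h=\lambda_h\,{\rm id}+\psi_h$ with $\psi_h({\mathfrak L})\subseteq Z({\mathfrak L})=\mathbb{C}c$ is unique, $h\mapsto\lambda_h$ is linear, and comparing $h\cdot x=x\cdot h$ on a triple $h,x,c$ of independent vectors (available because $\dim Z({\mathfrak L})=1$ with a nonzero bracket forces $\dim{\mathfrak L}\geq 3$, not just $\geq 2$ as you say) kills $\lambda_h$. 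So ${\mathfrak L}\cdot{\mathfrak L}\subseteq Z({\mathfrak L})$, and the compatibility identity then gives ${\mathfrak L}\cdot[{\mathfrak L},{\mathfrak L}]=0$. Up to this point your argument is sound.

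The gap is exactly at the step you flag as the crux, and it cannot be closed by the lemma you propose, because that lemma is false. Write $x\cdot y=B(x,y)\,c$ with $B$ symmetric. All you have extracted is that $B$ vanishes when one argument lies in $[{\mathfrak L},{\mathfrak L}]$, plus the associativity relation $B(h,x)B(c,y)=B(x,y)B(c,h)$. These constraints do not force $B=0$: for any nonzero functional $f$ vanishing on $[{\mathfrak L},{\mathfrak L}]+\mathbb{C}c$, the product $x\cdot y=f(x)f(y)\,c$ is commutative, associative (both triple products vanish since $f(c)=0$), satisfies $2z\cdot[x,y]=0=[z\cdot x,y]+[x,z\cdot y]$, and every $R_h$ is central-valued, hence a $\mathfrak c$-trivial ${\rm Hom}$-Lie structure --- yet $\cdot\neq 0$. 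Likewise, your $\mu_c\neq 0$ branch correctly pins the product down to $x\cdot y=\alpha\,\mu_c(x)\mu_c(y)\,c$, but this is again a genuine nonzero transposed Poisson product when $c\notin[{\mathfrak L},{\mathfrak L}]$ (e.g.\ $c\cdot c=\gamma c$ extended by zero on $[{\mathfrak L},{\mathfrak L}]$), not a contradiction; the phrase ``pushing this through yields the contradiction'' has no argument behind it. What actually finishes the proof is the additional fact ${\mathfrak L}=[{\mathfrak L},{\mathfrak L}]$ (or at least ${\mathfrak L}=[{\mathfrak L},{\mathfrak L}]+Z({\mathfrak L})$ with $Z({\mathfrak L})\subseteq[{\mathfrak L},{\mathfrak L}]$), for then $B$ vanishing on $[{\mathfrak L},{\mathfrak L}]$ in each argument gives $B=0$. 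That holds for the derived affine Kac--Moody algebras the paper has in mind, but it is not supplied by the hypothesis ``only $\mathfrak c$-trivial ${\rm Hom}$-Lie structures'': a rank-one map into the one-dimensional center is always a central ${\rm Hom}$-Lie structure, so the existence of a functional killing $[{\mathfrak L},{\mathfrak L}]+Z({\mathfrak L})$ is compatible with that hypothesis. You need either to assume perfectness or to prove it from the given data; as written, the final implication is a genuine gap.
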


By \cite{MZ18}, all ${\rm Hom}$-Lie structures on affine Kac–Moody algebras
are $\mathfrak c$-trivial. Hence, summarizing Proposition \ref{homctr} and results from  \cite{MZ18}, we have the following statement.

\begin{lemma}
Complex affine Kac–Moody algebras do not admit non-trivial transposed Poisson structures.
\end{lemma}

\subsection{Transposed Poisson structures 
on simple graded Lie algebras of finite growth}\label{growth}
Let us recall the definition of a complex simple graded Lie algebra of finite growth. A complex graded Lie algebra is
a Lie algebra $\mathfrak L$ endowed with a decomposition 
$\mathfrak L = \bigoplus\mathfrak L_i,$ such that $[\mathfrak L_i, \mathfrak L_j ] \subseteq \mathfrak L_{i+j}$
for every $i, j \in \mathbb{Z}.$ 
We always assume that every homogeneous component $\mathfrak L_i$ is of
finite dimension. A graded Lie algebra $\mathfrak L$ is called simple graded if $\mathfrak L$ is not abelian and does not contain any non-trivial graded ideal. We say that $\mathfrak L$ is of finite growth
if the function $n \to {\rm dim} \  \mathfrak L_n$ is bounded by some polynomial. 
In \cite{Mat92}, Mathieu proved the following theorem,
which was conjectured by Kac.

\begin{theorem} 
Let $\mathfrak L$ be a graded simple Lie algebra of finite growth. Then $\mathfrak L$ is
isomorphic to one of the following Lie algebras:
\begin{enumerate}
    \item  a finite-dimensional simple Lie algebra;
    \item  a loop algebra; 
    \item  a Cartan algebra; 
    \item  the Witt algebra ${\rm W}$.
\end{enumerate}

\end{theorem}
   
Xie and Liu described all  
${\rm Hom}$-Lie structures on simple graded Lie algebras of finite growth \cite{xl17}.
Summarizing results in \cite{Mat92} and \cite{xl17}, it is possible to prove the following theorem.

\begin{theorem} \label{finitegrowth}
Let $\mathfrak L$ be a complex graded simple Lie algebra of finite growth.  If $\mathfrak L$ admits  a non-trivial Transposed Poisson structure, then it is isomorphic to one of the following Lie algebras:
\begin{enumerate}
    \item  the Witt algebra ${\rm W}$, or
    \item  the Cartan algebra ${\rm W}(1)$. 
\end{enumerate}
\end{theorem}

\begin{proof}
In fact, thanks to \cite{xl17}, we know that only
a loop algebra, the Witt algebra ${\rm W}$ and the Cartan algebra ${\rm W}(1)$ have non-trivial  ${\rm Hom}$-Lie structures.

\begin{enumerate}
\item[$\bullet$] It is easy to see (from the description of  ${\rm Hom}$-Lie structures), that a loop algebra does not admit non-trivial Transposed Poisson structures.

    \item[$\bullet$] All non-trivial transposed Poisson structures on the Witt algebra are described in \cite[Theorem 20]{FKL}.

    \item[$\bullet$] The Cartan algebra ${\rm W}(1)$ admits the following 
    multiplication table 
    \begin{center}
        $[e_i, e_j]=(i-j)e_{i+j}, \ \  i,j \geq -1.$
    \end{center}
    Using the same idea that was used for the description of transposed Poisson structures on the Witt algebra \cite[Theorem 20]{FKL},
    we have that each transposed Poisson structure on ${\rm W}(1)$ 
    is given by the following additional multiplication
    $$e_i \cdot e_j =\sum\limits_{t >0} \alpha_t e_{i+j+t},$$ 
    for a finite set $\{ \alpha_t \}$. On the other hand, 
    each finite set $\{ \alpha_t \}$ provides a structure of transposed Poisson algebra by the given way.
    
\end{enumerate}

\end{proof}

\begin{remark}
Let us note that the   Cartan algebra ${\rm W}(1)$ gives an example of a simple Lie algebra that admits only non-unital transposed Poisson structures.

\end{remark}
 
\section{Relations with other  algebras}\label{rell}
\subsection{Quasi-Poisson algebras}\label{qp}
Recently, Billig defined quasi-Poisson algebras   \cite{billig}.
Namely, 

\begin{definition}[see \cite{billig}]
Let $(\A, \cdot)$ be a unital commutative associative  algebra and 
$(\A, \{ \cdot ,\cdot\})$ be a Lie algebra.
 $(\A, \cdot, \{ \cdot ,\cdot\})$ with a derivation $\mathfrak D$ of $(\A, \cdot)$ 
 is called a quasi-Poisson algebra if

\begin{center}
    $a \cdot   \big(\mathfrak D(\{b, c\}) + \{b, c\}\big) = 
    \{a \cdot (\mathfrak D(b) + b), c\} + 
    \{b, a \cdot (\mathfrak D(c) + c)\} + 
    \{a, b\} \cdot (\mathfrak D(c) + c) - (\mathfrak D(b) + b) \cdot \{a, c\}.$
\end{center}

\end{definition} 
The main example of quasi-Poisson algebras is given by relation (\ref{cbracket})   \cite[Lemma  6.3]{billig}.

\begin{corollary}
Let $(\A,\cdot, [\cdot,\cdot])$ be a unital transposed Poisson algebra. Then
$(\A,\cdot, [\cdot,\cdot])$ is a quasi-Poisson algebra
with the derivation $\mathfrak D$ obtained by the method from Theorem \ref{nonass}.
\end{corollary}

\subsection{Generalized Poisson algebras and algebras of   Jordan brackets}
Let us now recall the definition of generalized Poisson bracket and Jordan brackets \cite{ck07,k17}.
\begin{definition}
Let $(\A, \cdot)$ be a unital commutative associative algebra. 
\begin{enumerate}
    \item[$\bullet$] An anticommutative product $\{\cdot,\cdot\}: \A \otimes \A \to \A$ is called a generalized Poisson bracket (it is also known as the contact bracket   \cite{z21}) if 
$(\A, \{\cdot, \cdot\})$ is a Lie algebra and 
\begin{longtable}{rcl}
$\{x,y\cdot z\}$&$=$&$\{x,y\}\cdot z + y\cdot \{x,z\}-\{x,1\} \cdot y \cdot z.$
\end{longtable}
  \item[$\bullet$] An anticommutative product $\{\cdot,\cdot\}: \A \otimes \A \to \A$ is called a Jordan bracket (see, \cite{k17})  if 
\begin{longtable}{rcl} 
$\{x,y\cdot z\}$&$=$&$\{x,y\}\cdot z + y\cdot \{x,z\}-\{x,1\} \cdot y \cdot z,$\\
$\{x,\{y,z\}\} + \{y,\{z,x\}\} + \{z,\{x, y\}\}$&$ =$&$ \{x,1\}\cdot\{y,z\} + \{y,1\}\cdot\{z, x\} +\{z,1\}\cdot\{x, y\}.$
\end{longtable}
\end{enumerate}
\end{definition}
The main example of generalized Poisson brackets and  Jordan brackets is given by relation (\ref{cbracket}). Hence, we have the following corollary.

\begin{corollary}\label{transcontact}
Let $(\A,\cdot, [\cdot,\cdot])$ be a unital transposed Poisson algebra. Then
$(\A,\cdot, [\cdot,\cdot])$ is a generalized Poisson algebra and an algebra of Jordan brackets.
\end{corollary}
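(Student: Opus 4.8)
The plan is to combine Theorem \ref{nonass} with the fact, recalled just above the statement, that a bracket of the form (\ref{cbracket}) on a unital commutative associative algebra is simultaneously a generalized Poisson bracket and a Jordan bracket. Since $(\A,\cdot)$ is unital commutative associative, Theorem \ref{nonass} supplies a derivation $\mathfrak{D}$ of $(\A,\cdot)$ with $[x,y]=\mathfrak{D}(x)\cdot y-x\cdot\mathfrak{D}(y)$; evaluating at $y=1$ and using $\mathfrak{D}(1)=0$ gives $[x,1]=\mathfrak{D}(x)$, so the distinguished unary operation $x\mapsto\{x,1\}$ occurring in both defining identities is exactly $\mathfrak{D}$.

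With this in hand I would verify the generalized Poisson identity by a direct expansion: writing $\mathfrak{D}(y\cdot z)=\mathfrak{D}(y)\cdot z+y\cdot\mathfrak{D}(z)$, the left-hand side $[x,y\cdot z]$ becomes $\mathfrak{D}(x)\cdot y\cdot z-x\cdot\mathfrak{D}(y)\cdot z-x\cdot y\cdot\mathfrak{D}(z)$, which coincides term by term with $[x,y]\cdot z+y\cdot[x,z]-[x,1]\cdot y\cdot z$ after expanding the latter and cancelling the repeated summand $\mathfrak{D}(x)\cdot y\cdot z$. The Jordan identity is checked the same way, now additionally invoking the Jacobi identity for $[\cdot,\cdot]$ (which holds by hypothesis) together with commutativity and associativity of $\cdot$; more economically, one simply cites the classical computation of \cite{ck07,k17} showing that every bracket of shape (\ref{cbracket}) is a Jordan bracket, since that is precisely the "main example" quoted before the corollary. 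Either route yields the claim.

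The only place requiring a little care is the bookkeeping in the Jordan identity, where the five terms of the form $\{x,1\}\cdot\{y,z\}$ and their cyclic permutations must be matched against the expansions of the triple brackets; as none of this is conceptually hard, I would keep that verification brief by appealing to \cite{ck07,k17} rather than reproducing the full expansion, and record only the generalized Poisson computation in detail.
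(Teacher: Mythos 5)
Your proposal is correct and follows exactly the paper's route: the paper also derives the corollary by combining Theorem \ref{nonass} with the standard fact that every bracket of the form (\ref{cbracket}) on a unital commutative associative algebra is both a generalized Poisson bracket and a Jordan bracket. Your explicit expansion of the generalized Poisson identity is a (correct) elaboration of a step the paper leaves to the cited references, but the underlying argument is identical.
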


Martínez,  Shestakov, and Zelmanov  have shown that all Jordan brackets are embeddable into Poisson
brackets \cite{msz01}.
In this sense, summarizing Corollary \ref{transcontact} and results in \cite{msz01}, we have the following statement. 

\begin{corollary}\label{transcontactemb}
Unital transposed Poisson brackets are embeddable into Poisson
brackets.
\end{corollary}

\subsection{Algebras of Jordan brackets}\label{jordbr}
Let $(\A, \cdot)$ be a commutative associative algebra, and 
let $(\A, \{\cdot, \cdot\})$ be an anticommutative algebra.
$(\A, \cdot, \{\cdot, \cdot\})$ is called an algebra of Jordan brackets if the Kantor double of $(\A, \cdot, \{\cdot, \cdot\})$ is a Jordan superalgebra \cite{kantor92}.
If $(\A, \cdot)$ is not a unital algebra, then the conditions of Jordan brackets  are equivalent to (see \cite{k10}): 

{\small
\begin{longtable}{rcl}
    
 $\{\{x , y\}\cdot  z , t\} + \{\{y, t\} \cdot  z , x\}+ \{\{t, x\} \cdot z , y\} $&$=$&$ 
 \{x , y\} \cdot \{z , t\}+ \{y, t\} \cdot \{z , x\} + \{t , x\} \cdot \{z , y\}, $\\
 
$ \{y \cdot t , z\}\cdot x +\{x,z\}\cdot  y\cdot t $&$=$&$ \{t\cdot x , z\}\cdot y + \{y,z\}\cdot t\cdot x,$\\ 
 
 $\{t\cdot x , y\cdot z\}+\{t\cdot y , x\cdot z\}+\{x \cdot y\cdot z , t\}$&$ =
 $&$\{ t\cdot y , z\}\cdot x   + \{t\cdot x , z\}\cdot y+ x\cdot y\cdot \{z , t\}.$

\end{longtable}}
It is easy to see (for instance, using the identities from \cite[Theorem 2.5]{bai20}) 
that each transposed Poisson algebra is 
an algebra of Jordan brackets (see also \cite{fer23}).
Let us recall that Poisson algebras have the same property.
Hence, the variety of algebras of  Jordan brackets is the first \textquotedblleft interesting\textquotedblright \ (i.e., well-known) variety of algebras which includes all Poisson and all transposed Poisson algebras.

\begin{corollary}
The Kantor double of a transposed Poisson algebra is a Jordan superalgebra.
\end{corollary}

\subsection{Gelfand-Dorfman algebras (GD-algebras) and 
 $F$-manifold algebras}\label{fma}

 \begin{definition}
 A vector space $\A$ with a bilinear product $\cdot$ is called a Novikov algebra if 
 \begin{longtable}{rcl}
$(x \cdot y) ◦\cdot z-  x\cdot  (y \cdot z)$&$ =$&$ 
(y\cdot x) \cdot z - y \cdot  (x \cdot z),$\\
$(x \cdot y) \cdot z$ &$=$&$ (x\cdot z) \cdot y.$
\end{longtable}
\end{definition}

 \begin{definition}
 Let $(\A, \cdot)$ be a  Novikov algebra, 
 and let
$(\A, [ \cdot ,\cdot] )$ be a Lie algebra.
 $(\A, \cdot, [ \cdot ,\cdot] )$  
 is called a Gelfand-Dorfmann algebra if 
\begin{longtable}{rcl}
$[x, y \cdot z] - [z, y \cdot x] + [y, x] \cdot z - [y, z] \cdot x - y \cdot [x, z]$&$ =$&$ 0.$
\end{longtable} 
\end{definition}

 \begin{definition}
 Let $(\A, \cdot)$ be a  commutative associative  algebra, 
 and let
$(\A, [ \cdot ,\cdot])$ be a Lie algebra.
 $(\A, \cdot, [ \cdot ,\cdot])$  
 is called an $F$-manifold algebra if 
\begin{longtable}{lcl}
$[x \cdot y, z \cdot t]$&$ =$&$ [x \cdot y, z] \cdot t + 
[x \cdot y, t] \cdot z + x \cdot [y, z \cdot t] + y \cdot [x, z \cdot t]-$\\ 
&&$\big(x \cdot z \cdot [y, t] + y \cdot z \cdot [x, t] + 
y \cdot t \cdot [x, z] + x \cdot t \cdot [y, z]\big).$
\end{longtable} 
\end{definition}

Recently, Sartayev proved that the variety of transposed Poisson algebras coincides with the variety of commutative GD-algebras (i.e., $\cdot$ is commutative) and 
each commutative GD-algebra is an $F$-manifold algebra \cite{kms}.
Let us note that each Poisson algebra is also an $F$-manifold algebra.
Hence, the variety of $F$-manifold algebras gives the second \textquotedblleft interesting\textquotedblright \ variety of algebras which includes all Poisson and all transposed Poisson algebras.

\subsection{Transposed Poisson PI algebras}
The celebrated Amitsur-Levitsky theorem states that
the algebra
${\rm Mat}_k(R)$
of
$k \times k$
matrices over a commutative ring~%
$R$
satisfies the identity
$s_{2k}=0,$
where $s_k$ is the standard polynomial:
\begin{center}
    $s_m(x_1, \ldots, x_m)=\sum_{\sigma \in \mathbb{S}_m} (-1)^{\sigma} x_{\sigma(1)} \ldots x_{\sigma(m)}.$ 
\end{center}
Furthermore,
every associative PI algebra satisfies
$(s_k)^l=0$
by Amitsur's theorem.
Farkas defined customary polynomials
\begin{center} 
$g=
\sum_{\sigma \in {\mathbb S}_{m}} c_{\sigma} \{x_{\sigma(1)}, x_{\sigma(2)} \}\cdot\ldots\cdot \{ x_{\sigma(2i-1)} , x_{\sigma(2i)} \}$
\end{center}
and proved that
every Poisson PI algebra satisfies some customary identity \cite{F98}. 
Farkas' theorem was established for generic Poisson algebras in \cite{klms14}. 

Kaygorodov found an~analog 
of customary identities for unital generalized Poisson algebras and unital algebras of Jordan brackets in \cite{k17}:
\begin{eqnarray}
\label{tojdest}
{\mathfrak g}=
\sum\limits_{i=0}^{[m/2]}\sum_{\sigma \in {\mathbb S}_{m}} c_{\sigma,i} \langle x_{\sigma(1)}, x_{\sigma(2)}\rangle \cdot  \ldots \cdot
\langle x_{\sigma(2i-1)} , x_{\sigma(2i)} \rangle \cdot {\mathfrak D}(x_{\sigma(2i+1)})\cdot \ldots \cdot  {\mathfrak D}(x_{\sigma(m)}),
\end{eqnarray}
where 
$
\langle x,y\rangle :=\{x,y\} -\big({\mathfrak D}(x)\cdot y-x\cdot {\mathfrak D}(y)\big)
\mbox{ and } {\mathfrak D} \mbox{ is the related derivation, i.e.,  } {\mathfrak D}(x)=\{x,1\}.$

\begin{theorem}[Theorem 13, \cite{k17}] 
If a~unital generalized Poisson algebra~%
$\A$
satisfies a~polynomial identity,
then~%
$\A$
satisfies a~polynomial identity
${\mathfrak g}$
of type (\ref{tojdest}).
\end{theorem}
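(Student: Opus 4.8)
The plan is to adapt Farkas' proof of the Poisson case \cite{F98}, the two new ingredients being the passage to the modified bracket $\langle\cdot,\cdot\rangle$ and the bookkeeping of the derivation $\mathfrak D$. First I would reduce to a multilinear identity: over $\Co$ the standard linearization process turns any polynomial identity of $\A$ into a nonzero multilinear one, so I may assume $\A$ satisfies a multilinear identity $f(x_1,\dots,x_m)=0$, and I regard $f$ as an element of the multilinear component $\mathcal F_m$ of the free unital generalized Poisson algebra $\mathcal F$ on $x_1,x_2,\dots$ — the span of those monomials in which each of $x_1,\dots,x_m$ occurs exactly once.

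The next step rewrites everything through the modified bracket. A direct computation from the defining identity $\{x,y\cdot z\}=\{x,y\}\cdot z+y\cdot\{x,z\}-\mathfrak D(x)\cdot y\cdot z$ shows that $\langle x,y\rangle=\{x,y\}-(\mathfrak D(x)\cdot y-x\cdot\mathfrak D(y))$ is anticommutative, obeys the clean Leibniz rule $\langle x,y\cdot z\rangle=\langle x,y\rangle\cdot z+y\cdot\langle x,z\rangle$ with no correction term, and satisfies $\langle x,1\rangle=0$ (because $\mathfrak D(1)=\{1,1\}=0$). Since $\{x,y\}=\langle x,y\rangle+\mathfrak D(x)\cdot y-x\cdot\mathfrak D(y)$, the element $f$ can be rewritten as a polynomial in the three operations $\langle\cdot,\cdot\rangle$, $\cdot$ and $\mathfrak D$. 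These two properties of $\langle\cdot,\cdot\rangle$ are exactly what Farkas' argument needs: the clean Leibniz rule makes the substitution move below applicable, while $\langle x,1\rangle=0$ is what lets specialization to the unit separate the combinatorics cleanly.

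I would then fix a PBW-type normal form for $\mathcal F_m$, expressing every multilinear element as a linear combination of products of $\langle\cdot,\cdot\rangle$-Lie monomials and factors $\mathfrak D^{k}(x_i)$, each variable used once, and run the Farkas reduction on it. The moves are: (i) partially linearize a Lie monomial of length $\geq 3$ via the substitution $x_j\mapsto y\cdot z$ followed by a Leibniz expansion, which preserves an identity while strictly decreasing the maximal Lie-monomial length; (ii) remove a ``bare'' factor $x_j$ (one carrying neither a bracket nor a $\mathfrak D$) using the substitutions $x_j\mapsto 1$ and $x_j\mapsto \mathfrak D(x_j)$ to trade it for lower degree or for a first $\mathfrak D$-power; (iii) push $\mathfrak D^{k}$ with $k\geq 2$ down to first powers by a similar polarization. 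Iterating these and passing to the leading part with respect to the resulting filtration, the surviving identity is built only from single brackets $\langle x_i,x_j\rangle$ and first $\mathfrak D$-powers — precisely the shape (\ref{tojdest}).

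The main obstacle is establishing the PBW normal form for $\mathcal F$ and controlling step (iii) together with the non-commuting interaction between $\mathfrak D$ and $\langle\cdot,\cdot\rangle$: one must verify that the substitution and specialization moves do not annihilate the part of $f$ that is to be kept, i.e. that the filtration used to extract the leading term is genuinely compatible with all three reductions and that this leading term is nonzero. Everything else — the linearization and the Leibniz bookkeeping — is as in the classical Poisson case.
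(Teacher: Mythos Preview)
The paper does not contain its own proof of this statement: it is quoted verbatim as Theorem~13 of \cite{k17} and invoked as a black box to derive Theorem~\ref{farktr}. So there is no in-paper argument to compare against.

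Your sketch follows the right line and is essentially the strategy of \cite{k17}: pass from $\{\cdot,\cdot\}$ to the modified bracket $\langle\cdot,\cdot\rangle$, observe that $\langle\cdot,\cdot\rangle$ satisfies the honest Leibniz rule and kills the unit, and then rerun Farkas' reduction with the extra bookkeeping for $\mathfrak D$. Your verification that $\langle x,y\cdot z\rangle=\langle x,y\rangle\cdot z+y\cdot\langle x,z\rangle$ is correct, and this is indeed the crux that makes Farkas' substitution $x_j\mapsto y\cdot z$ go through unchanged. One point you gloss over but should state: $\mathfrak D=\{\,\cdot\,,1\}$ is a derivation of both $\cdot$ and $\{\cdot,\cdot\}$ (the latter by the Jacobi identity applied to $\{x,y,1\}$), and hence also of $\langle\cdot,\cdot\rangle$; this is what makes your step~(iii) and the interaction of $\mathfrak D$ with the Lie monomials tractable. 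Your honest flag at the end --- that the PBW-type normal form and the non-vanishing of the leading part under the combined filtration require care --- is exactly where the technical weight of \cite{k17} sits, and a full writeup would need those details spelled out rather than asserted.
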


Consequently,
we obtain the following statement.

\begin{theorem}\label{farktr}
\footnote{An analog of the present theorem was proven in \cite{diu}.
Namely, the authors proved that if a unital transposed Poisson algebra satisfies a $[\cdot,\cdot]$-free polynomial identity (depending only on the multiplication $\cdot$), then this algebra satisfies an identity of the type
${\mathfrak D}(x_1)\cdot\ldots \cdot {\mathfrak D}(x_m)=0.$
The present results motivate the following interesting question.

{\bf Open question}. 
Let $\bf A$ be a  commutative associative algebra with a derivation $\mathfrak D$  satisfying a derivation  polynomial identity  
(i.e., a functional identity with the derivation $\mathfrak D$). 
Is there a special type of identities for  $\bf A$?
} 
If a~unital transposed Poisson algebra~%
$\A$
satisfies a~polynomial identity,
then~%
$\A$
satisfies polynomial identities ${\mathfrak g}$ and ${\mathfrak f}$ of the following types
\begin{center}
    ${\mathfrak g}(x_1, \ldots, x_m)=
\sum_{\sigma \in {\mathbb S}_{m}} c_{\sigma}  {\mathfrak D}(x_{\sigma(1)})\cdot\ldots \cdot {\mathfrak D}(x_{\sigma(m)}),$
\end{center}
 where ${\mathfrak D}$ is a derivation,
 or 
\begin{center}
${\mathfrak f}(x_1, \ldots, x_{3m}) =
\sum_{\sigma \in \mathbb{S}_{m}} c_{\sigma}  
 \prod\limits_{k=1}^{m} 
\Big(  [ x_{\sigma(i)} \cdot x_{m+2k-1}, x_{m+2k} ] +[ x_{\sigma(i)} \cdot x_{m+2k}, x_{m+2k-1} ]  \Big).$\end{center} 
\end{theorem}
\begin{proof}
In fact,  in our case, $\langle x,y\rangle=0$
and the identity (\ref{tojdest}) gives the identity ${\mathfrak g}$ from our statement.
Now multiply the identity (\ref{tojdest}) by $\prod\limits_{k=m+1}^{3m} x_i$ and apply the following relation, which is a consequence of the  transposed Poisson identity and the generalized Poisson identity,
\begin{center}
 ${\mathfrak D}(x) \cdot y \cdot z= -\frac{1}{2} \big([ x \cdot y, z ] +[ x  \cdot z, y ]\big).$ 
\end{center}
Hence, the identity  ${\mathfrak g}$ gives the identity  ${\mathfrak f}.$
\end{proof}

\subsection{Transposed Poisson fields}\label{tpfields}
Denote by $\mathcal{Q}(\mathfrak{P})$ the field of fractions of a commutative associative algebra $\mathfrak{P}.$ The Poisson bracket $\{\cdot,\cdot\}$ on $\mathfrak{P}$ can be   extended (see, for example, \cite{kaledin,mlu16,makar12}) to a Poisson bracket on    $\mathcal{Q}(\mathfrak{P})$ and
\[\left\{ \frac{a}{b}, \frac{c}{d} \right\} \  = \ 
\frac{\{a,c\}\cdot b\cdot d -\{a,d\}\cdot b\cdot c - \{b,c\}\cdot a\cdot d+\{b,d\}\cdot a\cdot c}{b^2\cdot d^2}.\]
    
Our aim in the present subsection is to define a similar notion to that of the transposed Poisson case.    
 
\begin{theorem}\label{tpfield}
Let $(\mathfrak{L}, \cdot, [\cdot,\cdot])$ be a unital transposed Poisson algebra.
Then $( \mathcal{Q}(\mathfrak{L}), \bullet, \llbracket \cdot,\cdot \rrbracket),$
where 
\[  \frac{a}{b} \bullet \frac{c}{d} \ = \ \frac{a\cdot c}{b\cdot d}    \mbox{\ \ and \ \ } 
\left\llbracket \frac{a}{b}, \frac{c}{d} \right\rrbracket \ = \ \frac{[a,b]\cdot c\cdot d-a\cdot b\cdot [c,d]}{b^2\cdot d^2}\]
is a  transposed Poisson field.

\end{theorem}
\begin{proof}
In fact, it is easy to see that $ \bullet$ is a commutative associative multiplication and 
$ \llbracket \cdot,\cdot \rrbracket$ is anticommutative.
Hence, we should check only two identities to complete the proof
(for simplification of our proof the multiplication $\cdot$ will be omitted).
Thanks to theorem \ref{nonass}, our bracket $[x,y]$ 
can be written as $[x,y]=\mathfrak{D}(x)y-x\mathfrak{D}(y)$ for a derivation $\mathfrak{D}.$
Hence,

\begin{longtable}{lclclc}
\multicolumn{6}{l}{$\frac{a}{b} \bullet \left\llbracket \frac{c}{d}, \frac{e}{f} \right\rrbracket -
\frac{1}{2}\left( \left\llbracket  \frac{a}{b} \bullet  \frac{c}{d}, \frac{e}{f} \right\rrbracket+ \left\llbracket \frac{c}{d}, \frac{a}{b} \bullet  \frac{e}{f} \right\rrbracket\right)=$}\\
\multicolumn{4}{r}{$\frac{[c,d]aef-acd[e,f]}{bd^2f^2}-\frac{[ac,bd]ef-acbd[e,f]+[c,d]aebf-cd[ac,bf]}{2b^2d^2f^2}$}&$=$&  \\ 
$(2b^2d^2f^2)^{-1}$&$\Big($&$2ab\mathfrak{D}(c)def-2abc\mathfrak{D}(d)ef-2abcd\mathfrak{D}(e)f+2abcde\mathfrak{D}(f)-$\\
&& \multicolumn{1}{c}{$(\mathfrak{D}(a)bcdef+ab\mathfrak{D}(c)def-a\mathfrak{D}(b)cdef-abc\mathfrak{D}(d)ef)+$}\\
&& \multicolumn{1}{c}{$(abcd\mathfrak{D}(e)f-abcde\mathfrak{D}(f))-(ab\mathfrak{D}(c)def-abc\mathfrak{D}(d)ef)+$}\\
&& \multicolumn{1}{r}{$(\mathfrak{D}(a)bcdef+ab\mathfrak{D}(c)def-a\mathfrak{D}(b)cdef-abcde\mathfrak{D}(f))$}&$\Big)$&$=$&$0.$
\end{longtable}
Now we should check the Jacobi identity.
For a function $f(x_1,x_2,y_1,y_2,z_1,z_2)$, 
we denote $$f(x_1,x_2,y_1,y_2,z_1,z_2)+f(y_1,y_2,z_1,z_2,x_1,x_2)+f(z_1,z_2,x_1,x_2,y_1,y_2)$$
as
$\circlearrowright f(x_1,x_2,y_1,y_2,z_1,z_2).$ 

Hence,

\begin{longtable}{lclc}
$\circlearrowright  \left\llbracket \frac{x_1}{x_2}, \left\llbracket \frac{y_1}{y_2}, \frac{z_1}{z_2} \right\rrbracket \right\rrbracket$ &$=$&
$\circlearrowright  \left\llbracket \frac{x_1}{x_2}, 
\frac{[y_1,y_2]z_1z_2-y_1y_2[z_1,z_2]}{y_2^2z_2^2}   \right\rrbracket$&$=$\\ 
\multicolumn{4}{r}{
$\frac{\circlearrowright \big( [x_1,x_2]([y_1,y_2]z_1z_2-y_1y_2[z_1,z_2]) \big)}{ x_2^2y_2^2z_2^2}-
\frac{\circlearrowright \big( x_1x_2^3[[y_1,y_2]z_1z_2-y_1y_2[z_1,z_2],y_2^2z_2^2] \big)}{x_2^4y_2^4z_2^4}.$}
\end{longtable} 

We will prove that each part of the last sum is equal to zero.
For the first part of our sum, we have the following.

\begin{longtable}{llcl}
$\circlearrowright$ & $\big( [x_1,x_2]([y_1,y_2]z_1z_2-y_1y_2[z_1,z_2]) \big)= $\\

$\circlearrowright$ & 
$\big(
\mathfrak{D}(x_1)x_2\mathfrak{D}(y_1)y_2z_1z_2-x_1\mathfrak{D}(x_2)\mathfrak{D}(y_1)y_2z_1z_2+\mathfrak{D}(x_1)x_2y_1\mathfrak{D}(y_2)z_1z_2-$\\
&
\multicolumn{1}{r}{$x_1\mathfrak{D}(x_2)y_1\mathfrak{D}(y_2)z_1z_2-\mathfrak{D}(x_1)x_2y_1y_2\mathfrak{D}(z_1)z_2
+\mathfrak{D}(x_1)x_2y_1y_2z_1\mathfrak{D}(z_2)-$}\\

& 
\multicolumn{1}{r}{$x_1\mathfrak{D}(x_2)y_1y_2\mathfrak{D}(z_1)z_2
+x_1\mathfrak{D}(x_2)y_1y_2z_1\mathfrak{D}(z_2) \big)$}&$=0.$

\end{longtable} 

We rewrite the second part in a sum of two other sums in the following way.

$\circlearrowright \big( x_1x_2^3[[y_1,y_2]z_1z_2-y_1y_2[z_1,z_2],y_2^2z_2^2] \big)=$

$\circlearrowright \big( x_1x_2^3[\mathfrak{D}(y_1)y_2z_1z_2-y_1\mathfrak{D}(y_2)z_1z_2- y_1y_2\mathfrak{D}(z_1)z_2+y_1y_2z_1\mathfrak{D}(z_2),y_2^2z_2^2] \big)=$
\begin{longtable}{llll}
$\circlearrowright  x_1x_2^3 \big($&$

\mathfrak{D}(\mathfrak{D}(y_1)y_2z_1z_2)y_2^2z_2^2- \mathfrak{D}(y_1)y_2z_1z_2\mathfrak{D}(y^2_2z^2_2)-$\\
&$\mathfrak{D}(y_1\mathfrak{D}(y_2)z_1z_2)y_2^2z_2^2+y_1\mathfrak{D}(y_2)z_1z_2\mathfrak{D}(y_2^2z_2^2)-$\\
&$\mathfrak{D}(y_1y_2\mathfrak{D}(z_1)z_2)y_2^2z_2^2+y_1y_2\mathfrak{D}(z_1)z_2\mathfrak{D}(y_2^2z_2^2)+$\\
&$\mathfrak{D}(y_1y_2z_1\mathfrak{D}(z_2))y_2^2z_2^2-y_1y_2z_1\mathfrak{D}(z_2)\mathfrak{D}(y_2^2z_2^2)$ & $\big).$
\end{longtable}

Hence, the first sub-sum is given by the following terms.
\begin{longtable}{llll}
 
$\circlearrowright  x_1x_2^3y_2^2z_2^2 \big($&$
\mathfrak{D}(\mathfrak{D}(y_1)y_2z_1z_2)-
\mathfrak{D}(y_1\mathfrak{D}(y_2)z_1z_2)- 
\mathfrak{D}(y_1y_2\mathfrak{D}(z_1)z_2)+
\mathfrak{D}(y_1y_2z_1\mathfrak{D}(z_2))$ &$\big)=$\\

$\circlearrowright  x_1x_2^3y_2^2z_2^2 \big($&$
\mathfrak{D}^2(y_1)y_2z_1z_2+
\mathfrak{D}(y_1)\mathfrak{D}(y_2)z_1z_2+
\mathfrak{D}(y_1)y_2\mathfrak{D}(z_1)z_2+
\mathfrak{D}(y_1)y_2z_1\mathfrak{D}(z_2)-$\\

&$\mathfrak{D}(y_1)\mathfrak{D}(y_2)z_1z_2-
y_1\mathfrak{D}^2(y_2)z_1z_2-
y_1\mathfrak{D}(y_2)\mathfrak{D}(z_1)z_2-
y_1\mathfrak{D}(y_2)z_1\mathfrak{D}(z_2)-$\\

&$\mathfrak{D}(y_1)y_2\mathfrak{D}(z_1)z_2-
y_1\mathfrak{D}(y_2)\mathfrak{D}(z_1)z_2-
y_1y_2\mathfrak{D}^2(z_1)z_2-
y_1y_2\mathfrak{D}(z_1)\mathfrak{D}(z_2)+$\\

&$\mathfrak{D}(y_1)y_2z_1\mathfrak{D}(z_2)+
y_1\mathfrak{D}(y_2)z_1\mathfrak{D}(z_2)+
y_1y_2\mathfrak{D}(z_1)\mathfrak{D}(z_2)+
y_1y_2z_1\mathfrak{D}^2(z_2)$ & $\big)=$\\

&$\circlearrowright     \big(
x_1\mathfrak{D}^2(y_1)z_1- x_1y_1\mathfrak{D}^2(z_1) \big)(x_2y_2z_2)^3+$\\

&$\circlearrowright   \big(
x_2\mathfrak{D}^2(y_2)z_2- x_2y_2\mathfrak{D}^2(z_2) \big)x_1y_1z_1(x_2y_2z_2)^2 +$\\

&$2\circlearrowright   
\big( x_1x_2\mathfrak{D}(y_1)y_2z_1 \mathfrak{D}(z_2)-  x_1x_2y_1 \mathfrak{D}(y_2)\mathfrak{D}(z_1)z_2 \big)(x_2y_2z_2)^3=$\\

&\multicolumn{3}{r}{$2\circlearrowright   
\big( x_1x_2\mathfrak{D}(y_1)y_2z_1 \mathfrak{D}(z_2)-  x_1x_2y_1 \mathfrak{D}(y_2)\mathfrak{D}(z_1)z_2 \big)(x_2y_2z_2)^2.$} 

\end{longtable}

The second sub-sum is given by the following terms.

 \begin{longtable}{rlll}
$\circlearrowright  x_1x_2^3\mathfrak{D}(y_2^2z_2^2) \big($&$

 - \mathfrak{D}(y_1)y_2z_1z_2
 +y_1\mathfrak{D}(y_2)z_1z_2
 +y_1y_2\mathfrak{D}(z_1)z_2
 -y_1y_2z_1\mathfrak{D}(z_2)$ & $\big)=$\\

$2\circlearrowright  x_1x_2^3 \big($&$
-\mathfrak{D}(y_1)y_2^2\mathfrak{D}(y_2)z_1z_2^3
-\mathfrak{D}(y_1)y_2^3z_1z_2^2\mathfrak{D}(z_2)
+y_1y_2(\mathfrak{D}(y_2))^2z_1z_2^3+$\\
&$y_1y_2^2\mathfrak{D}(y_2)z_1z_2^2\mathfrak{D}(z_2)+
y_1y_2^2\mathfrak{D}(y_2)\mathfrak{D}(z_1)z_2^3
+y_1y_2^3\mathfrak{D}(z_1)z_2^2\mathfrak{D}(z_2)-$\\
&\multicolumn{1}{r}{$y_1y_2^2\mathfrak{D}(y_2)z_1z_2^2\mathfrak{D}(z_2)
-y_1y_2^3z_1z_2^2(\mathfrak{D}(z_2))^2$} & $\big)=$\\

& $2\circlearrowright    \big(x^2_2(\mathfrak{D}(y_2))^2 z_2^2-x_2^2 y_2^2 (\mathfrak{D}(z_2))^2  \big)x_1x_2y_1y_2z_1z_2-$\\

& $2\circlearrowright  
\big(x_1x_2\mathfrak{D}(y_1)\mathfrak{D}(y_2)  z_1z_2 -x_1x_2y_1y_2\mathfrak{D}(z_1)\mathfrak{D}(z_2)  \big)(x_2y_2z_2)^2 -$\\

&$2\circlearrowright   
\big( x_1x_2\mathfrak{D}(y_1)y_2z_1 \mathfrak{D}(z_2)-  x_1x_2y_1 \mathfrak{D}(y_2)\mathfrak{D}(z_1)z_2 \big)(x_2y_2z_2)^2=$ \\

&\multicolumn{3}{r}{$-2\circlearrowright   
\big( x_1x_2\mathfrak{D}(y_1)y_2z_1 \mathfrak{D}(z_2)-  x_1x_2y_1 \mathfrak{D}(y_2)\mathfrak{D}(z_1)z_2 \big)(x_2y_2z_2)^2.$} 

\end{longtable}
 
To summarize, the sum of these two sub-sums is zero, and
the Jacobi identity holds. This finishes the proof. 
\end{proof}

\subsection{Transposed Poisson $n$-Lie algebras and Poisson $n$-Lie algebras}
The following result is a direct generalization of a result from \cite{bai20} for the $n$-ary case.

\begin{definition}
Let ${\mathfrak L}$ be a vector space equipped with two nonzero operations -- a bilinear operation $\cdot$ and an $n$-linear operation $[\cdot ,\ldots, \cdot].$
The triple $({\mathfrak L},\cdot,[\cdot ,\ldots, \cdot])$ is called a Poisson $n$-Lie algebra if $({\mathfrak L},\cdot)$ is a commutative associative algebra and
$({\mathfrak L},[\cdot ,\ldots, \cdot])$ is an $n$-Lie algebra that satisfies the following compatibility condition
\begin{equation}
 [x \cdot y , z_2\ldots, z_n]=   x \cdot [y, z_2, \ldots, z_n]+[x, z_2, \ldots, z_n]  \cdot y. 
 \label{np} 
\end{equation}

\end{definition}

\begin{proposition}
Let $(\mathfrak L, \cdot)$ be a commutative algebra, and let $(\mathfrak L, [ \cdot , \ldots, \cdot])$ be an $n$-Lie algebra. Then $(\mathfrak L, \cdot,  [ \cdot,\ldots , \cdot ])$ is both a Poisson $n$-Lie and a transposed Poisson $n$-Lie algebra if and only if 
\begin{center}
    $x\cdot [y_1, \ldots, y_n] = [x\cdot y_1, \ldots, y_n] = 0.$    
\end{center}
\end{proposition} 

\begin{proof}
In fact, let $u, x_1, \ldots, x_n \in \mathfrak L$. 
Substituting Eq. (\ref{tnp}) into Eq. (\ref{np}), we have 
\begin{longtable}{lcl}
    $n x\cdot [y_1, \ldots, y_n]$ &$=$& $ \sum\limits_{i=1}^n  [y_1,\ldots, x \cdot y_i, \ldots, y_n]$\\
    & $=$& $ \sum\limits_{i=1}^n  x\cdot [y_1, \ldots, y_n]+\sum\limits_{i=1}^n  y_i\cdot [y_1, \ldots, y_{i-1},x,y_{i+1}, \ldots, y_n].$ 
\end{longtable}
Substituting Eq. (\ref{np}) again into the above equality, we have 
\begin{longtable}{lclcl}
$0$ &$=$ &    $\sum\limits_{i=1}^n  y_i\cdot [y_1, \ldots, y_{i-1},x,y_{i+1}, \ldots, y_n]$&$ $ &\\

$ $ &$=$ &     $\sum\limits_{i=1}^n 
    \big(\sum\limits_{t=1}^{i-1}  [y_1, \ldots y_i\cdot y_t, \ldots, y_{i-1},x,y_{i+1}, \ldots, y_n]\big)$  & $+$ &\\
  
$ $ &$ $ &    \multicolumn{1}{r}{$\sum\limits_{i=1}^n 
  [y_1, \ldots  y_{i-1},y_i\cdot x,y_{i+1},  \ldots, y_n] $} & $+$ &\\

 $ $ &$ $ &   $\sum\limits_{i=1}^n 
    \big(\sum\limits_{t=i+1}^{n}  [y_1, \ldots  y_{i-1},x,y_{i+1}, \ldots, y_i\cdot y_t, \ldots, y_n]\big)$ & $ $ &\\
    
&  $=$  &    \multicolumn{1}{r}{$\sum\limits_{i=1}^n 
  [y_1, \ldots  y_{i-1},y_i\cdot x,y_{i+1},  \ldots, y_n] $} & $=$ & $n x \cdot  [y_1, \ldots  , y_n].$ \\

\end{longtable}
Hence $x \cdot  [y_1, \ldots  , y_n]=0$ and $[x \cdot  y_1, \ldots  , y_n]=0.$  

\end{proof}

\section{Open questions}\label{open} 

In what follows, we present a list where we have compiled open questions for future research related to transposed Poisson structures.

\begin{question}[Pasha Zusmanovich]
Formulate the notion of  \textquotedblleft transposed\textquotedblright \ for any variety of algebras/operads with two $n$-ary operations. Could this be done in either operadic or categorical theoretical language? This general notion should explain the appearance of \textquotedblleft scaling by 2\textquotedblright \ in the definition of transposed Poisson algebras. What would be the transposed analogs of dendriform, post-Lie, Novikov-Poisson, etc.?
What would be self-transposed algebras/operads?
\end{question}

Thanks to \cite[Theorem 8]{FKL}, we know that if a Lie algebra does not admit non-trivial $\frac{1}{2}$-derivations, then it does not admit non-trivial $\frac{1}{2}$-biderivations and non-trivial transposed Poisson structures.
All algebras which admit non-trivial $\frac{1}{2}$-derivations
(from the present paper and \cite{FKL,yh21}) also admit non-trivial $\frac{1}{2}$-biderivations and non-trivial transposed Poisson structures. The last results motivate the following question.

\begin{question}\footnote{From \cite{kksp}, 
there is a $6$-dimensional perfect non-semisimple  Lie algebra with non-trivial $\frac{1}{2}$-derivations and without non-trivial transposed Poisson algebra structures.}
Is there a Lie algebra that admits non-trivial $\frac{1}{2}$-derivations and does not admit non-trivial $\frac{1}{2}$-biderivations?
Is there a Lie algebra that admits non-trivial $\frac{1}{2}$-derivations and $\frac{1}{2}$-biderivations, but does not admit non-trivial transposed Poisson structures?
\end{question}

One of the first examples of non-trivial transposed Poisson algebras was given for the Witt algebra \cite[Theorem 20]{FKL}.
The central extension of the Witt algebra is 
the Virasoro algebra and it has no non-trivial
$\frac{1}{2}$-derivations,  
non-trivial $\frac{1}{2}$-biderivations and non-trivial transposed Poisson structures \cite[Theorem 27]{FKL}.
Thanks to \cite{xl17} we know that the Cartan (infinite-dimensional) algebras $S_n$ and $H_n$ do not have 
 non-trivial transposed Poisson structures
(and $\frac{1}{2}$-derivations, $\frac{1}{2}$-biderivations).
It is easy to see that if the dimension of the annihilator of an algebra is greater than $1,$ then it admits 
non-trivial transposed  Poisson structures
(and $\frac{1}{2}$-derivations, $\frac{1}{2}$-biderivations). 
Dzhumadildaev proved  that the dimensions of annihilators of central extensions of  $S_n$ and $H_n$ 
are greater than $1$ in \cite{dzhuma92}. 
It follows that there are infinite-dimensional Lie algebras without  non-trivial transposed Poisson structures
(and $\frac{1}{2}$-derivations, $\frac{1}{2}$-biderivations), but whose central extensions admit these non-trivial structures.
 The last results motivate the following question.

\begin{question}
Is there a Lie algebra that does not admit non-trivial transposed Poisson structures
($\frac{1}{2}$-derivations, $\frac{1}{2}$-biderivations), 
but whose (indecomposable) one-dimensional central extension admits them?
\end{question}

 There are no non-trivial transposed Poisson structures defined on a complex finite-dimensional semisimple Lie algebra \cite[Corollary 9]{FKL}. On the other hand, 
 the
Schrödinger algebra, which is the semidirect product of a simple and a nilpotent algebra, also does not have non-trivial transposed Poisson structures (other perfect non-simple Lie algebras without non-trivial $\frac{1}{2}$-derivations, also known as Galilean algebras, can be found in \cite{klz22}).
Theorem \ref{nilpotent} states that each finite-dimensional nilpotent Lie algebra has a non-trivial transposed Poisson structure ($\frac{1}{2}$-derivations, $\frac{1}{2}$-biderivations). 
Results from \cite{klz22} state that each finite-dimensional solvable non-nilpotent Lie algebra has a non-trivial transposed Poisson structure ($\frac{1}{2}$-derivations, $\frac{1}{2}$-biderivations). 
 The last results motivate the following question.

\begin{question}\footnote{From \cite{kksp}, 
there is a $6$-dimensional perfect non-semisimple  Lie algebra with non-trivial $\frac{1}{2}$-derivations.}
\footnote{From \cite{zzk}, 
there is a $9$-dimensional non-perfect non-solvable  Lie algebra with non-trivial $\frac{1}{2}$-derivations and non-trivial transposed Poisson structures.}
Is there a non-perfect  (non-solvable) Lie algebra that does not admit non-trivial transposed Poisson structures 
($\frac{1}{2}$-derivations, $\frac{1}{2}$-biderivations)? 
Is there a perfect  (non-semisimple) Lie algebra that   admits non-trivial transposed Poisson structures 
($\frac{1}{2}$-derivations, $\frac{1}{2}$-biderivations)? 
\end{question}

The description of $\frac{1}{2}$-derivations gives a good criterion for the detection of Lie algebras that do not admit non-trivial transposed Poisson structures. 
Another criterion can be given by using ${\rm Hom}$-Lie structures on Lie algebras (Proposition \ref{homtr}).
However, the last criterion is not sufficient:
the simple $3$-dimensional algebra $\mathfrak{sl}_2$ has non-trivial ${\rm Hom}$-Lie structures \cite{xjl15}, but it does not admit non-trivial transposed Poisson structures \cite[Corollary 9]{FKL}.
 Filippov proved that each $\delta$-derivation ($\delta\neq0,1$) gives a non-trivial ${\rm Hom}$-Lie algebra structure \cite[Theorem 1]{fil1}.
The last results motivate the following question.

\begin{question}
Describe Lie algebras such that each  ${\rm Hom}$-Lie algebra structure gives a $\frac{1}{2}$-derivation.
Which types of ${\rm Hom}$-Lie structures can guarantee the existence of non-trivial transposed Poisson structures on a Lie algebra?
\end{question}

Let us denote the associative algebra generated by $\frac{1}{2}$-derivations of $\mathfrak L$ as $\Delta_{\frac{1}{2}}(\mathfrak L)$. The Witt algebra ${\rm W}$ has the following interesting property:
$ \mathfrak{Der} (\Delta_{\frac{1}{2}}({\rm W})) \cong {\rm W}$ \cite[Theorem 19]{FKL}.
  The last results motivate the following question.

\begin{question}
Is there a Lie algebra $\mathfrak L$ ($ \not\cong \rm W $) such that $ \mathfrak{Der} (\Delta_{\frac{1}{2}}(\mathfrak L)) \cong \mathfrak L$?

\end{question}

Thanks to 
 \cite[Corollary 9]{FKL}, there are no non-trivial transposed Poisson structures defined on a
complex   finite-dimensional semisimple Lie algebra;
Theorem \ref{nonass} establishes that there are no non-trivial transposed Poisson structures defined on a
complex semisimple commutative associative algebra.
On the other hand, there are simple Lie algebras (the Witt algebra ${\rm W}$ and the Cartan algebra ${\rm W}(1)$) 
that admit non-trivial transposed Poisson structures,
but, in these cases, the associative part is not simple (Theorem \ref{finitegrowth}).
   The last results motivate the following question.

\begin{question}
Is there a (not necessarily commutative) transposed Poisson algebra $(\mathfrak L, \cdot, [\cdot, \cdot])$ such that 
its Lie and associative parts are simple (semisimple, prime, semiprime) algebras?
\end{question} Combining results from Theorem \ref{nonass} and \cite[Proposition 28]{FK21}, we have that the Kantor product of a unital transposed Poisson algebra gives a new transposed Poisson algebra.
Furthermore, the Kantor product of transposed Poisson algebras constructed on the Witt algebra gives new transposed Poisson algebras \cite[Proposition 27]{FK21}.
  The last results motivate the following question.

\begin{question}
Is there a transposed Poisson algebra such that the algebra constructed by the Kantor product of multiplications (see \cite{FK21}) is not a transposed Poisson algebra?
\end{question}

The Kantor double gives a way for the construction of Jordan superalgebras from Poisson algebras and from
algebras of Jordan brackets \cite{kantor92}.
As we can see from Corollary \ref{transcontact}, 
each unital transposed Poisson algebra, by the Kantor double process, gives a Jordan superalgebra.
 Moreover, the Kantor double of the commutator of a Novikov-Poisson algebra is a Jordan superalgebra \cite{zz}.
Thanks to \cite{fer23}, each transposed Poisson superalgebra under the Kantor double process gives a Jordan superalgebra.
  The last results motivate the following question.

\begin{question}
Describe and study the class of superalgebras (= a subvariety of Jordan superalgebras) 
obtained by the Kantor double process from transposed Poisson algebras.
\end{question}

The celebrated Amitsur–Levitsky theorem states that the algebra ${\rm Mat}_k$ of $k \times k$ matrices
over a commutative ring ${\rm R}$ satisfies the identity $s_{2k} = 0.$ 
Furthermore, every
associative PI algebra satisfies $(s_k)^l = 0$ by Amitsur’s theorem. 
Farkas defined customary Poisson polynomials and proved that every Poisson PI algebra satisfies some customary identity \cite{F98}. 
Recently, the Farkas’ theorem was established for generic Poisson algebras in \cite{klms14}, 
for unital generalized Poisson algebras in \cite{k17}
and unital transposed Poisson algebras (Theorem \ref{farktr}).
 The last results motivate the following question.

\begin{question}
Is there a special type of polynomial identities for (not necessarily unital)  transposed Poisson algebras satisfying a polynomial identity?
\end{question}

In 1972, Regev proved that the tensor product of two associative PI algebras is an associative PI algebra.
Recently, a similar result was obtained for Poisson algebras \cite{mpr07}.
 The last results motivate the following question.

\begin{question}
Is a tensor product of two transposed Poisson PI algebras a  transposed Poisson PI algebra?
\end{question}

It is known that the tensor product of two Poisson algebras gives a Poisson algebra \cite{mpr07}, 
the tensor product of two transposed Poisson algebras gives a transposed Poisson algebra \cite{bai20},
the tensor product of two  pre-Lie Poisson algebras gives a   pre-Lie Poisson algebras \cite{bai20},
the tensor product of two Novikov-Poisson algebras gives a Novikov-Poisson algebra \cite{xu97}, 
and 
the tensor product of two anti-pre-Lie Poisson algebras gives an anti-pre-Lie Poisson algebra \cite{bai22}.
In the case of the standard multiplication of the  tensor product  of Poisson algebras, 
it is not true for  $F$-manifold algebras  \cite{lsb21}.
On the other hand, 
in the case of generalized Poisson algebras,
their tensor product does not necessarily give a new generalized Poisson algebra \cite{z21}.
 The last results motivate the following question.

\begin{question}[Pasha Zusmanovich]
Find identities for algebras formed by the tensor product of 
a transposed Poisson algebra and a Poisson algebra.
\end{question}

As it was mentioned in sections \ref{jordbr} and \ref{fma},
all Poisson and all transposed Poisson are algebras of Jordan brackets and 
$F$-manifold algebras.
Hence, Poisson and transposed Poisson algebras are at the intersection of
the varieties of $F$-manifold algebras and algebras of Jordan brackets.
 The last results motivate the following question.

\begin{question}
Characterize the variety of algebras obtained as the intersection of $F$-manifold algebras and algebras of Jordan brackets.
Find a minimal variety of algebras that includes 
Poisson and transposed Poisson algebras.
\end{question}

The notion of a depolarized  Poisson algebra 
(i.e., a binary algebra such that   the commutator and the anticommutator products give a Poisson algebra) was introduced and studied in \cite{MR06,gr08}.
 The last results motivate the following question.

\begin{question}\footnote{Askar Dzhumadildaev proved that weak Leibniz algebras 
give the depolarization of transposed Poisson algebras \cite{dzhuma}.}
Define and study the variety of depolarized transposed Poisson algebras.
\end{question}

In 1950, Hall constructed a basis for 
free Lie algebras. 
Later on: 
Shestakov  constructed a basis for free Poisson algebras;
Shestakov and Zhukavets constructed a basis for 
free Poisson-Malcev superalgebra with one generator;
Kaygorodov, Shestakov, and Umirbaev constructed a basis for free generic Poisson algebras in  \cite{ksu18};
and Kaygorodov constructed a basis for free unital generalized Poisson algebras in \cite{k17}.
 The last results motivate the following question.

\begin{question}\footnote{Roughly speaking, a basis of a free Poisson algebra is a basis of a polynomial ring on a free Lie algebra. Our first impression (Chengming Bai mentioned, in one of his talks, that it was also the first impression of his team) was that a basis of a free transposed Poisson algebra can be found as a free Lie algebra on a polynomial ring, but this is not true.}
Construct a basis for free transposed Poisson algebras.
\end{question}

In 1930, Magnus proved one of the most important theorems of the combinatorial group theory.
Let $G=\langle x_1,x_2, \ldots,x_n \ |\ r=1\rangle$ 
be a group defined by a single cyclically reduced relator $r.$ 
If $x_n$ appears in $r,$ then the subgroup of $G$ generated by 
$x_1, \ldots ,x_{n-1}$ is a free group, freely generated by $x_1,\ldots, x_{n-1}$.
He called it the Freiheitssatz and used it to give several applications, the decidability of the word problem for groups with a single defining relation among them.
 The Freiheitssatz was studied in various varieties of groups and algebras: Shirshov established it for Lie algebras; Romanovskii researched it for solvable and nilpotent groups;  Makar-Limanov proved it for associative algebras over a field of characteristic zero. 
 Recently it was confirmed for Novikov and  right-symmetric algebras,
 as well as
 for Poisson algebras \cite{mlu09} and generic Poisson algebras \cite{klms14}.
 The last results motivate the following question.

\begin{question}
Is the  Freiheitssatz true for transposed Poisson algebras?
\end{question}

In 1942,  Jung proved that all automorphisms of the polynomial algebra in two variables are tame. Later, in 1970, Makar-Limanov established the same result for the free associative algebra in two variables. 
Cohn proved that the automorphisms of a free Lie algebra with a finite set of generators are tame.
Shestakov and Umirbaev proved that polynomial algebras and free associative algebras in three variables in the case of characteristic zero have wild automorphisms \cite{su04,u07}. 
At the same time, 
Abdykhalykov, Mikhalev, and  Umirbaev 
found wild automorphisms in two generated free Leibniz algebras. 
It was recently proved that free Poisson and generic Poisson algebras in two variables do not have wild automorphisms \cite{ksu18, mltu09}.
The last results motivate the following question.

\begin{question}
Are the automorphisms of a two-generated free transposed Poisson algebra tame?
\end{question}

Makar-Limanov and Umirbaev \cite{mlu07} proved an analog of the Bergman Centralizer Theorem: the centralizer of every non-constant element in a free Poisson algebra is a polynomial algebra in one variable.  
The last results motivate the following question.

\begin{question}[Zerui Zhang]
Describe the centralizer of a non-constant element in a free transposed Poisson algebra.
\end{question}

Thanks to Corollary \ref{transcontact}, we know that each unital transposed Poisson algebra is an algebra of Jordan brackets.
Martínez,  Shestakov, and   Zelmanov  have shown that all unital Jordan brackets are embeddable into Poisson
brackets \cite{msz01}.
Hence, all unital transposed Poisson brackets are embeddable into Poisson
brackets.
The last results motivate the following question.

\begin{question}
Are transposed Poisson brackets embeddable into Poisson brackets?
\end{question}

Thanks to a famous relationship between Poisson algebras and Jordan superalgebras, 
also known as Kantor double, the description of simple Jordan superalgebras describes simple Poisson algebras.
Also recently,  
simple generalized Poisson algebras and simple generalized Gerstenhaber algebras (odd 
 generalized Poisson superalgebras) have been described \cite{ck07,ck10}.
 The last results motivate the following question.

\begin{question}\footnote{Thanks to \cite{fer23}, there are no non-trivial complex 
  finite-dimensional simple transposed Poisson algebras.}
Classify simple transposed Poisson algebras.
\end{question}

The study of some generalizations of Poisson algebras (noncommutative Poisson algebras;
Leibniz-Poisson algebras \cite{casas};
generic Poisson algebras\cite{ksu18}, etc.) motivate the consideration of some generalizations of transposed Poisson algebras. 
\begin{question}
Define and study the following generalizations of transposed Poisson algebras:
\begin{enumerate}[(a)]
    \item noncommutative transposed Poisson algebras;
    \item transposed Poisson-Leibniz (in particular, symmetric Leibniz) algebras;
    \item transposed generic Poisson algebras.

\end{enumerate}

\end{question}

One of the important questions in the study of Poisson and transposed Poisson algebras is a description of (transposed)  Poisson algebra structures on a certain Lie algebra or a certain associative algebra.
The most useful way for a description of Poisson structures on a certain associative algebra is given by the study of derivations and biderivations of this algebra.
On the other hand, 
a description of 
transposed Poisson structures on a certain Lie algebra may be given by the study of $\frac{1}{2}$-derivations and  $\frac{1}{2}$-biderivations of this algebra.
Recently, a constructive method for describing Poisson algebra structures on a certain Lie algebra 
has been given in a study of Leibniz bialgebras and symmetric Leibniz algebras \cite{said2}.
 The last results and Theorem \ref{nonass}  motivate the following question.

\begin{question}
Give a constructive method for describing transposed Poisson algebras
on a certain non-unital associative (not necessarily commutative) algebra.
\end{question}

It is known that the Poisson bracket admits an extension to the field of fractions  \cite{kaledin,mlu16,makar12,ksu18}.
As we proved in Theorem \ref{tpfield}, 
a unital transposed Poisson bracket admits an extension to the field of fractions.
 The last results motivate the following question.

\begin{question} 
Does the bracket in Theorem \ref{tpfield} give an extension of a transposed Poisson algebra to the field of fractions in the non-unital case?
\end{question}

The universal multiplicative
enveloping algebra of free Poisson and free generic Poisson fields were studied in \cite{mlu16,ksu18}.
In particular, in these papers, it was proved that the universal multiplicative enveloping algebra is a free ideal ring. The last results motivate the following question.

\begin{question} 
Is the universal multiplicative enveloping algebra of a free transposed Poisson field a free ideal ring?
\end{question}

Multiplicative enveloping algebras are often much simpler than the algebras themselves, and in the case where the variety is not obvious, they can help a lot. A basis of the universal enveloping algebra ${\rm P}^e$ of a free Poisson algebra ${\rm P}$ was constructed and it was
proved that the left dependency of a finite number of elements
of ${\rm P}^e$ over ${\rm P}^e$ is algorithmically recognizable \cite{u12}.
 The last results motivate the following question.

\begin{question}[Vladimir Dotsenko]
Construct a basis of the universal enveloping algebra ${\rm TP}^e$ of a free transposed Poisson algebra ${\rm TP}$.
Is left dependency of a finite number of elements of ${\rm TP}^e$ over ${\rm TP}^e$  algorithmically recognizable?
 \end{question}

It is known that each Novikov-Poisson algebra under commutator product on non-associative multiplication gives a transposed Poisson algebra \cite{bai20}.
Let us say that a transposed Poisson algebra is special if it can be embedded into a  Novikov-Poisson algebra relative to the commutator bracket. 
Similarly, let us say that a transposed Poisson algebra is  $D$-special  (from ``differentially'') if it embeds into a commutative algebra with a derivation relative to the bracket $[x,y]=\mathfrak D(x)y-x\mathfrak D(y).$ Obviously, every $D$-special transposed Poisson algebra is a special one. It is known too that the class of special Jordan algebras (i.e. embedded into associative algebras relative to the multiplication $x \circ y =xy+yx$) is a quasivariety, but it is not a variety of algebras \cite{sverchkov}.  On the other hand,   the class of all special Gelfand-Dorfman algebras (i.e., embedded into Poisson algebras with derivation relative to the multiplication $x \circ y =xd(y)$) is closed concerning homomorphisms and thus forms a variety \cite{ks21}.
 The last results motivate the following question.

\begin{question}[Pavel Kolesnikov]
Does the class of special (resp., $D$-special) transposed Poisson algebras form a variety or a quasivariety?
\end{question}

Let us call special identities to all identities that hold on all special transposed Poisson algebras but do not hold on the class of all transposed Poisson algebras. 
The study of special and non-special identities is a  popular   topic in non-associative algebras (see, for example, 
 the Jordan algebra case in \cite{gl},
 the Gelfand-Dorfman algebra case in  \cite{kso19},
 the case of dialgebras in \cite{kv13},
 the Jordan trialgebra and post-Jordan algebra cases in \cite{bbm},
 etc.). 
  The last results motivate the following question.

\begin{question}[Pavel Kolesnikov]
Find the identities separating the variety generated by special (resp., $D$-special) transposed Poisson algebras in the variety of all transposed Poisson algebras.
Find the identities separating the variety generated by $D$-special transposed Poisson algebras in the variety generated by special transposed Poisson algebras.
\end{question}

Also known is that: each two-generated Jordan algebra is special \cite{shirshov};
each one-generated Jordan dialgebra is special \cite{vv12};
each $2$-dimensional  Gelfand-Dorfman algebra is special \cite{ks21}.
  The last results motivate the following question.

\begin{question}
Are one-generated transposed Poisson algebras special?
\end{question}

Thanks to \cite{mcc92}, 
all unital transposed Poisson algebras by Kantor double process give special Jordan superalgebras.
On the other hand, there are examples of Poisson algebras that do not have special Kantor double \cite{mcc92}.
 The last results motivate the following question.

\begin{question}
Is the Kantor double of a transposed Poisson algebra a special Jordan superalgebra?
\end{question}

Among Gelfand–Dorfman algebras, a relevant place is occupied by special ones,
i.e., those that can be embedded into differential Poisson algebras (see \cite{kso19}). Namely, a Gelfand–Dorfman algebra $V$ with
operations $[\cdot, \cdot]$ and $*$ is special if there exists a  Poisson algebra $(P, \cdot, \{\cdot,\cdot\})$  and derivation $d$ such that $V \subseteq  P$ and $[u, v] = \{u, v\},$ with $u * v = d(u)v$ for all $u, v \in V.$
Recently, 
Kolesnikov and Nesterenko proved that 
each transposed Poisson algebra obtained from a Novikov-Poisson algebra is a special Gelfand–Dorfman algebra \cite{kn23}.
Sartayev proved that  transposed Poisson algebras satisfy the special identities of
 Gelfand–Dorfman algebras \cite{kms}.
 The last results motivate the following question.

\begin{question}[Pavel Kolesnikov, Bauyrzhan Sartayev]
Are transposed Poisson algebras special Gelfand–Dorfman algebras?
\end{question}

From the geometric point of view, the variety of $n$-dimensional algebras defined by a family of polynomial identities gives a subvariety in $\mathbb{C}^{n^3}.$
The geometric classification of a variety of algebras is a description of his geometric variety: defining irreducible components, their dimensions, and rigid algebras
(see \cite{ak21,fkkv22,kv20} and references therein).
The variety of transposed Poisson algebras
gives a subvariety in $\mathbb{C}^{2n^3}.$ 
The geometric classification of complex $3$-dimensional transposed Poisson algebras was given in \cite{bk22}.
Neretin, in his paper \cite{ner}, considered $n$-dimensional associative, commutative, and Lie algebras, and gave the upper bound for the dimensions of their geometric varieties.
  The last results motivate the following question.

\begin{question}
Give the upper bound for dimensions of geometric varieties of $n$-dimensional transposed Poisson algebras.
\end{question}

The \textquotedblleft Koszul\textquotedblright \ property of an (quadratic) operad is quite important. The explicit definition needs more notions
on operads \cite{gk94,lv13}.  If an operad ${\rm P}$ is Koszul, one can give the $\infty$-structure of such an operad, ${\rm P}_{\infty}$-algebra. For example,
Lie, associative, pre-Lie, and Poisson are Koszul and hence we have ${\rm L}_{\infty}$-, ${\rm A}_{\infty}$-, ${\rm PL}_{\infty}$-, ${\rm P}_{\infty}$-algebras.
These remarks motivate the following question.

\begin{question}[Li Guo]\footnote{Here is a short proof of the fact that the operad $\mathrm{TP}$ of
transposed Poisson algebras is not Koszul (communicated by Vladimir
Dotsenko). Using Gr\"obner bases for Operads
(\url{http://irma.math.unistra.fr/~dotsenko/Operads.html}), one finds
that the dimensions of the components of the operad $\mathrm{TP}$ up to
parity five are given by 1,2,6,20,74. Moreover, this operad is known
to be self-dual under the Koszul duality for operads. Therefore, if it
were Koszul, the exponential generating function $f(x)$ of dimensions
of its components would satisfy the functional equation $f(-f(-x))=x$.
However, using the dimensions of the components listed above, we find
$f(-f(-x))=x + 7/30x^5 + O(x^6)$, and therefore the operad
$\mathrm{TP}$ is not Koszul.}
\footnote{Another proof was given by Askar Dzhumadildaev in \cite{dzhuma}:
he defined the weak Leibniz algebras, as algebras satisfying
$(ab-ba)c=2a(bc)-2b(ac)$ and 
$a(bc-cb)=2(ab)c-2(ac)b.$ Furthermore, he proved that weak Leibniz operad is not Koszul; operads of weak Leibniz algebras and transposed Poisson algebras are isomorphic. }
The operad of the transposed Poisson is Koszul?
If so, there might be a ${\rm TP}_{\infty}$-algebra structure. \end{question}

Some recent works about the cohomology theory of Poisson algebras \cite{cohom1,cohom2} motivate the following question.

\begin{question} 
Describe the cohomology theory of transposed Poisson algebras.
\end{question}

Poisson algebras are the semi-limit of quantization
deformation of commutative associative algebras into associative
algebras. $F$-manifold algebras are the semi-limit of
quantization deformation of commutative associative algebras (or
commutative pre-Lie algebras) into pre-Lie algebras.
 The last results motivate the following question.

\begin{question}[Chengming Bai]
Into which algebras are the transposed Poisson algebras the semi-limit of quantization deformation of commutative
algebras?
\end{question}

The study of commutative and noncommutative Poisson bialgebras \cite{lbs20,nb13} motivate the following question.

\begin{question}[Chengming Bai]
\footnote{Recently, the present question was successfully resolved in \cite{lb23}.}
Define and study transposed Poisson bialgebras.
\end{question}

The study of double  Poisson algebras \cite{v08} motivates the following question.

\begin{question}
Define and study double transposed Poisson algebras.
\end{question}

Bai, Bai, Guo, and Wu proved that  each transposed Poisson algebra
$({\mathfrak L}, \cdot, [\cdot, \cdot])$ gives a
transposed Poisson $3$-Lie algebra by defining the following  multiplication: 
\[
[x, y, z] = \mathfrak D(x) \cdot  [y, z] - \mathfrak D(y)\cdot [x,z] + \mathfrak D(z)\cdot [x, y]. \]
 The last results motivate the following question.

\begin{question}[Chengming Bai, Ruipu  Bai, Li Guo \& Yong Wu]
\footnote{A particular case was proved in \cite{conj}.}
Let $n \geq 2$ be an integer. Let $({\mathfrak L}, \cdot, [\cdot, \ldots, \cdot])$ be a transposed Poisson $n$-Lie algebra, let $\mathfrak D$ be a derivation of $({\mathfrak L}, \cdot)$ and $({\mathfrak L}, [\cdot, \ldots, \cdot])$. Define an $(n+1)$-ary operation
\[\llbracket x_1,  \ldots, x_{n+1} \rrbracket := \sum\limits_{i=1}^{n+1} (-1)^{i+1}{\mathfrak D}(x_i) \cdot [x_1, \ldots, \hat{x}_i, \ldots,  x_{n+1}], \]
where $\hat{x}_i$ means that the $i$-th entry is omitted. 
Is $({\mathfrak L}, \cdot, \llbracket \cdot,  \ldots, \cdot \rrbracket)$  a transposed Poisson
$(n + 1)$-Lie algebra?
\end{question}

Cantarini and Kac classified all complex linearly compact 
 (generalized) Poisson $n$-Lie algebras in \cite{ck16}, which motivates the following question.

\begin{question}
Give examples and classify simple transposed Poisson $n$-Lie algebras.
\end{question}

It is known that the variety of Gerstenhaber algebras coincides with the variety of odd Poisson superalgebras (see, for example,  \cite{ck10}).
This remark motivates the following question.

\begin{question}
Define and study transposed Gerstenhaber algebras.
\end{question}

It is known \cite{bai20} that the tensor product of transposed Poisson algebras is equipped with a canonical transposed Poisson algebra structure. 
On the level of operads, this means that the operad $\mathrm{TPA}$ of transposed Poisson algebras is a Hopf operad. Many natural examples of Hopf operads arise in algebraic topology
when one computes the homology of a topological operad. This suggests the following question. One may define, by analogy with transposed Poisson algebras, and transposed Gerstenhaber 
algebras, which are given by similar axioms, but with the shifted Lie bracket of homological degree one (and appropriate signs arising from the ``Koszul sign rule''). The corresponding
operad seems to be a Hopf operad as well. 

\begin{question}[Vladimir Dotsenko]
Is it the homology operad of some operad made of topological spaces? In the case of the Gerstenhaber operad, the corresponding topological operad
is the operad of little 2-disks, one of the protagonists of the operad theory in its early days. 
\end{question}

{\bf Compliance with ethical standard}

\medskip 


{\bf Conflict of interest:}   There are no competing interests.

{\bf Data Availibility:} The manuscript has no associated data

\end{document}